\newtheorem{proposition}{Proposition}[section]
\newtheorem{lemma}[proposition]{Lemma}
\newtheorem{definition}[proposition]{Definition}
\newtheorem{Theorem}[proposition]{Theorem}
\newtheorem{corollary}[proposition]{Corollary}
\newtheorem{property}[proposition]{Property}
\begin{document}
\begin{CJK*}{GBK}{song}

\centerline{\Large{\textbf{Envelope Word and Gap Sequence in Doubling Sequence}}}

\vspace{0.1cm}


\centerline{Yuke Huang\footnote[1]{Department of Mathematical Sciences, Tsinghua University, Beijing, 100084, P. R. China.}$^,$\footnote[2]{E-mail address: hyg03ster@163.com.}
~~Hanxiong Zhang\footnote[3]{Department of Mathematics, China University of Mining and Technology, Beijing, 100083, P. R. China.}$^,$\footnote[4]{E-mail address: zhanghanxiong@163.com(Corresponding author).}}

\vspace{1cm}

\centerline{\textbf{\large{ABSTRACT}}}

\vspace{0.4cm}

\noindent Let $\omega$ be a factor of Doubling sequence $D_\infty=x_1x_2\cdots$, then it occurs in the sequence infinitely many times. Let $\omega_p$ be the $p$-th occurrence of $\omega$ and $G_p(\omega)$ be the gap between $\omega_p$ and $\omega_{p+1}$.
In this paper, we discuss the structure of the gap sequence $\{G_p(\omega)\}_{p\geq1}$.
We prove that all factors can be divided into two types, one type has exactly two distinct gaps $G_1(\omega)$ and $G_2(\omega)$, the other type has exactly three distinct gaps $G_1(\omega)$, $G_2(\omega)$ and $G_4(\omega)$. We determine the expressions of gaps completely. And also give the substitution of each gap sequence.
The main tool in this paper is ``envelope word", which is a new notion, denoted by $E_{m,i}$.
As an application, we determine the positions of all $\omega_p$, discuss some combinatorial properties of factors, and count the distinct squares beginning in $D_\infty[1,N]$ for $N\geq1$.

\noindent\textbf{Key Word:} Envelope word, Gap sequence, Doubling sequence, Position, Square.


\vspace{1cm}

\setcounter{section}{1}

\noindent\textbf{\large{1.~Introduction}}

\vspace{0.4cm}

Factor property have been studied extensively, such as Lothaire\cite{L1983,L2002}. Doubling sequence, sometimes appears in a different name: period-doubling sequence, is a
classical example of a substitution of constant length over a binary alphabet.
It is also the difference of Thue-Morse sequence \cite{B1989,P1981}.
The Doubling sequence, among other so-called primitive substitution sequence, has many remarkable properties, and it appears in many aspects of mathematics and computer science, symbolic dynamics, theoretical computer science etc., we refer to Allouche and Shallit\cite{AS2003}, Berstel\cite{B1980} and also Mauduit\cite{M2001}.

Wen and Wen\cite{WW1994} studied the factor structure of Fibonacci sequence, where they defined the singular word and give the positively separate property of the singular words.
Huang and Wen\cite{HW2014} extend the results from singular words to arbitrary words $\omega$ of the Fibonacci sequence, and discuss the structure of gap sequence $\{G_p(\omega)\}_{p\geq1}$.
The main aim of this article is to extend the results in Huang and Wen\cite{HW2014} from Fibonacci sequence to Doubling sequences $D_\infty$.

The main result in this paper is as follows.

\vspace{0.2cm}

\noindent\textbf{Theorem} (Gap sequence of factor $\omega\prec D_\infty$)\textbf{.}

\emph{(1) All factors can be divided into two types;}

\emph{(2) One type of factor has exactly two distinct gaps $G_1(\omega)$ and $G_2(\omega)$, the gap sequence is $\varphi_1(D_\infty)$, where $\varphi_1(a,b)=(a,bb)$;}

\emph{(3) The other type of factor has exactly three distinct gaps $G_1(\omega)$, $G_2(\omega)$ and $G_4(\omega)$, the gap sequence is $\varphi_2(D_\infty)$, where $\varphi_2(a,b)=(ab,acac)$.}

\vspace{0.2cm}

The main tool in this paper is ``envelope word". Using it, we can determine the type of each factor, and give the expressions of each gap $G_p(\omega)$ and each substitution $\varphi_i$.
Then we can determine the structure of gap sequence of $D_\infty$ completely.

\vspace{0.2cm}

This paper is organized as follows.

Section 1 is devoted to the introduction and preliminaries. In Section 2, we give some basic properties of Doubling sequence. In Section 3,
we define a new notion ``envelope word" and give the weak version of ``uniqueness of envelope extension property", there are two types of envelope words.
In Section 4, we discuss the gaps and gap sequence of envelope words.
Then we give the strong version of ``uniqueness of envelope extension property" in Section 5, which makes envelope words quite special. Using them, we determine the gaps and gap sequence of an arbitrary word.
As an application, we determine the positions of $\omega_p$ for all $(\omega,p)$ in Section 6. In Section 7, we study some combinatorial properties of factors in Doubling sequence.

\vspace{0.4cm}

\noindent\emph{1.1 Notation and Basic Properties}

\vspace{0.4cm}

Let $\mathcal{A}=\{a,b\}$ be a binary alphabet. Let $\mathcal{A}^\ast$ be the set of finite words on $\mathcal{A}$ and $\mathcal{A}^{\mathbb{N}}$ be the set of one-sided infinite words. The elements of $\mathcal{A}^\ast$ are called words or factors, which will be denoted by $\omega$. The neutral element of $\mathcal{A}^\ast$ is called the empty word, which we denote by $\varepsilon$. For a finite word $\omega=x_1x_2\cdots x_n$, the length of $\omega$ is equal to $n$ and denoted by $|\omega|$.
The number of occurrences of letter $\alpha\in\mathcal{A}$ in $\omega$ is denoted by $|\omega|_\alpha$.

The concatenation of two words $\nu=x_1x_2\cdots x_r$ and $\omega=y_1y_2\cdots y_n$ is the word $\nu\omega=x_1x_2\cdots x_ry_1y_2\cdots y_n$. This operation is associative and has a unit element, the empty word $\varepsilon$.

As usual, the Doubling sequence $D_\infty$ is defined as the fixed point beginning with letter $a$ under the Doubling substitution $\sigma$, which is defined over the alphabet $\mathcal{A}=\{a,b\}$ by $\sigma(a)=ab$ and $\sigma(b)=aa$:
$$D_\infty=\sigma^\infty(a)=abaaabab abaaabaa abaaabababaaabab\cdots$$

We also use the notations that $A_m=\sigma^m(a)$ and $B_m=\sigma^m(b)$ where $m\geq0$.
Obviously, $|A_m|=|B_m|=2^m$.

The notation $\nu\prec\omega$ means that word $\nu$ is a factor of word $\omega$.

Let $\tau=x_1x_2\cdots x_m\cdots$ be a sequence. For any $i\leq j$, define $\tau[i,j]:=x_ix_{i+1}\cdots x_{j-1}x_j$, the factor of $\tau$ of length $j-i+1$, starting from the $i$-th letter and ending to the $j$-th letter. By convention, we note $\tau[i]:=\tau[i,i]=x_i$ and $\tau[i,i-1]:=\varepsilon$.

If $\omega=\tau[i,j]$, $\tau$ is a word or a sequence,
$\omega$ is said to be occur at position $i$ in $\tau$. Furthermore, $L(\omega,p)$ denotes the position of $\omega_p$.

Let $\nu=\nu_1\nu_2\cdots\nu_n\in\mathcal{A}^\ast$, we denote by $\nu^{-1}:=\nu_n^{-1}\cdots\nu_2^{-1}\nu_1^{-1}$, called the inverse word of $\nu$.
If $\omega=u\nu$, then $\omega^{-1}=(u\nu)^{-1}=\nu^{-1}u^{-1}$.
Furthermore, $\omega\nu^{-1}=u\nu\nu^{-1}=u$ and $u^{-1}\omega=u^{-1}u\nu=\nu$.

\vspace{0.4cm}

\noindent\emph{1.2 Factor Sequence and Gap Sequence}

\vspace{0.4cm}

Let $\omega$ be a factor of Doubling sequence $D_\infty$. In this subsection, we will introduce some definitions: factor sequence $\{\omega_p\}_{p\ge 1}$, gap word $G_p(\omega)$, gap sequence $\{G_p(\omega)\}_{p\ge 1}$, etc.

\begin{definition}[Factor sequence]
Let $\omega$ be a factor of Doubling sequence, then it occurs in the sequence infinitely many times, which we arrange by the sequence
$\{\omega_p\}_{p\ge 1}$, where $\omega_p$ denote the $p$-th occurrence of $\omega$.
\end{definition}

\begin{definition}[Gap]
Let $\omega_p=x_{i+1}\cdots x_{i+n}$, $\omega_{p+1}=x_{j+1}\cdots x_{j+n}$, the gap
between $\omega_p$ and $\omega_{p+1}$, denoted by $G_p(\omega)$, is defined by
\begin{equation*}
G_p(\omega)=
\begin{cases}
\varepsilon&when~i+n=j,~\omega_p~and~\omega_{p+1}~are~adjacent;\\
x_{i+n+1}\cdots x_{j}&when~i+n<j,~\omega_p~and~\omega_{p+1}~are~separated;\\
(x_{j+1}\cdots x_{i+n})^{-1}&when~i+n>j,~\omega_p~and~\omega_{p+1}~are~overlapped.
\end{cases}
\end{equation*}

The set of gaps of factor $\omega$ is defined as $\{G_p(\omega)|~p\geq1\}$.
\end{definition}


\noindent\textbf{Example.} $G_1(aa)=a^{-1}$ (overlapped), $G_2(aa)=babab$ (separated), $G_2(aab)=\varepsilon$ (adjacent).

\vspace{0.2cm}

\noindent\textbf{Remark.}
A closely related concept of ``Gap" is ``Return Word", introduced by F.Durand\cite{D1998}, for characterizing a sequence over a finite alphabet to be substitutive, he proved that a sequence is primitive substitutive if and only if the set of its return words is finite, which means, each factor of this sequence has finite return words.
In 2001, L.Vuillon\cite{V2001} proved that an infinite word $\tau$ is a Sturmian sequence if and only if each non-empty factor $\omega\prec\tau$ has exactly two distinct return words.
Some other related researches (see also \cite{BPS2008}) focused in the cardinality of the set of return words of $\omega$ and the consequent results, but didn't concern about the structures of the sequences derived by return words.

Essentially gap words can be derived from the return words by deleting one prefix $\omega$, but since the terminology ``gap" will be convenient and have some advantages for our discussions, we prefer to adopt it.

\begin{definition}[Gap sequence]
Let $G_p(\omega)$ be the gap between $\omega_p$ and $\omega_{p+1}$, we call $\{G_p(\omega)\}_{p\ge 1}$ the gap sequence of the factor $\omega$.
\end{definition}


\vspace{0.5cm}

\stepcounter{section}

\noindent\textbf{\large{2.~Some Basic Properties of Doubling Sequence}}

\vspace{0.4cm}

In this section, we will give some very important properties of Doubling sequence $D_\infty$.

\begin{definition}[$\delta_m$] Let $\delta_m$ be the last letter of $A_m$.
\end{definition}

\begin{property} (1) $\delta_{m-1}$ be the last letter of $B_m$;

(2) For $m\geq0$, $\delta_m=a$ (resp. $b$) when $m$ is even (resp. odd).
\end{property}

\begin{proposition} $A_m\delta_m^{-1}=B_m\delta_{m+1}^{-1}$ for $m\geq0$.  \end{proposition}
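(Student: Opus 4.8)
\emph{Proof proposal.} The plan is to prove this by induction on $m$, after first reinterpreting the identity. By Property 2.2, $\delta_m$ is the last letter of $A_m$ and, since $\delta_{m+1}=\delta_{m-1}$ (both indices having the same parity), $\delta_{m+1}$ is the last letter of $B_m$. Hence $A_m\delta_m^{-1}$ and $B_m\delta_{m+1}^{-1}$ are obtained from $A_m$ and $B_m$ respectively by deleting their last letter, and the proposition is equivalent to the statement that $A_m$ and $B_m$ share a common prefix $W_m:=A_m\delta_m^{-1}=B_m\delta_{m+1}^{-1}$ of length $2^m-1$, differing only in the last letter.

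For the base case $m=0$ one checks directly: $A_0=a$, $B_0=b$, $\delta_0=a$, $\delta_1=b$, so $A_0\delta_0^{-1}=\varepsilon=B_0\delta_1^{-1}$. For the inductive step, assume $A_m\delta_m^{-1}=B_m\delta_{m+1}^{-1}=W_m$, so that $A_m=W_m\delta_m$ and $B_m=W_m\delta_{m+1}$ with $\{\delta_m,\delta_{m+1}\}=\{a,b\}$. Applying $\sigma$ and using $A_{m+1}=\sigma(A_m)$, $B_{m+1}=\sigma(B_m)$, we get $A_{m+1}=\sigma(W_m)\sigma(\delta_m)$ and $B_{m+1}=\sigma(W_m)\sigma(\delta_{m+1})$, where $\{\sigma(\delta_m),\sigma(\delta_{m+1})\}=\{ab,aa\}$. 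In either parity case, deleting the last letter of $\sigma(\delta_m)$ and of $\sigma(\delta_{m+1})$ leaves the single letter $a$, so both $A_{m+1}$ and $B_{m+1}$ reduce to $\sigma(W_m)a$ after removing their last letter. Invoking Property 2.2 once more ($\delta_{m+1}$ is the last letter of $A_{m+1}$ and $\delta_{m+2}$ the last letter of $B_{m+1}$), this says exactly $A_{m+1}\delta_{m+1}^{-1}=\sigma(W_m)a=B_{m+1}\delta_{m+2}^{-1}$, completing the induction.

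This argument is essentially a bookkeeping computation, so I do not expect a genuine obstacle; the only point requiring care is keeping straight which letter is being cancelled on each side, i.e.\ consistently using the parity identification $\delta_{m+1}=\delta_{m-1}$ from Property 2.2 so that the inverse-word cancellations on the $B$-side are legitimate. A cleaner packaging, which I may adopt instead of the case split, is the elementary remark that for any word $u$ the words $\sigma(ua)=\sigma(u)ab$ and $\sigma(ub)=\sigma(u)aa$ differ only in their last letter; feeding the inductive hypothesis into this remark gives the step in one line.
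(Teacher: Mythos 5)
Your proof is correct. The paper states this proposition without any proof at all, so there is nothing to compare against; your induction on $m$ --- writing $A_m=W_m\delta_m$, $B_m=W_m\delta_{m+1}$ with $\{\delta_m,\delta_{m+1}\}=\{a,b\}$, applying $\sigma$, and noting that $\sigma(a)=ab$ and $\sigma(b)=aa$ share the prefix $a$ --- is the natural argument the authors evidently considered too routine to record, and your handling of which letter is cancelled on the $B$-side (via $\delta_{m+1}=\delta_{m-1}$ and Property 2.2) is exactly the point that needed care.
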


\begin{property}[Palindrome] $A_m\delta_m^{-1}$ is a palindrome for $m\geq0$.\end{property}

\begin{proof} By induction, when $m=0,1,2$, $A_0\delta_0^{-1}=\varepsilon$, $A_1\delta_1^{-1}=a$, $A_2\delta_2^{-1}=aba$. Assume $A_{m}\delta_m^{-1}$ is a palindrome.
Since $A_{m+2}\delta_m^{-1}=A_{m}B_{m}A_{m}A_{m}\delta_m^{-1}$ and
$B_m\delta_{m-1}^{-1}=A_m\delta_m^{-1}$,
$$A_{m+2}\delta_m^{-1}
=A_{m}\delta_m^{-1}\ast \delta_m\ast A_{m}\delta_m^{-1}\ast \delta_{m-1}\ast A_{m}\delta_m^{-1}\ast \delta_m\ast A_{m}\delta_m^{-1}.$$
That means $A_{m+2}\delta_m^{-1}$ is a palindrome too.
\end{proof}

\begin{lemma}[Positions] For $m\geq1$,

(1) $A_m$ occurs in $A_mB_mA_m$ twice at positions 1 and $2^{m+1}+1$;

(2) $A_m$ occurs in $A_mA_m$ twice at positions 1 and $2^m+1$;

(3) $B_m$ occurs in $B_mB_{m-1}B_{m+1}B_m$ twice at positions 1 and $7\ast2^{m-1}+1$;

(4) $B_m$ occurs in $B_mB_{m-1}B_m$ twice at positions 1 and $3\ast2^{m-1}+1$.
\end{lemma}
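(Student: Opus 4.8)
The plan is to prove all four claims by a combination of the substitution rules $\sigma(a)=ab$, $\sigma(b)=aa$ together with the recursive relations $A_{m+1}=A_mB_m$, $B_{m+1}=A_mA_m$ (which follow from $\sigma(\sigma^m(a))=\sigma(A_m\text{'s letters})$ applied letterwise), and the palindrome/prefix facts already established in Property~2.4 and Proposition~2.3. The common strategy for each item is: first exhibit the two claimed occurrences directly by rewriting the ambient word as a concatenation of copies of $A_m$ and $B_m$; then show no \emph{other} occurrence exists, using a length and alignment argument together with the fact that $A_m$ and $B_m$ share a long common prefix (indeed $A_m$ and $B_m$ agree on their first $2^m-1$ letters, since $A_m\delta_m^{-1}=B_m\delta_{m+1}^{-1}$ by Proposition~2.3) but differ in the last letter.

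For (1): write $A_mB_mA_m$ as a word of length $3\cdot 2^m$; the occurrence at position $1$ is immediate, and the occurrence at position $2^{m+1}+1=2\cdot 2^m+1$ is the final block $A_m$. To rule out intermediate occurrences, I would argue by induction on $m$: an occurrence of $A_m$ at some position $j$ with $2\le j\le 2^{m+1}$ would have to be compatible with the structure $A_m=A_{m-1}B_{m-1}$ and $B_m=A_{m-1}A_{m-1}$, so one reduces to the position statement at level $m-1$ inside the relevant length-$2^{m-1}$-aligned subwords; the base cases $m=1,2$ are checked by hand. Item (2) is parallel and slightly easier, since $A_mA_m$ has length $2^{m+1}$ and, using $B_{m+1}=A_mA_m$, an occurrence of $A_m$ in $A_mA_m$ at position $j$ with $1<j\le 2^m+1$ forces $j=2^m+1$ by the same prefix/last-letter dichotomy. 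For (3) and (4) I would translate everything into the $B$-language: note $B_{m+1}=A_mA_m$ and $B_m=A_{m-1}A_{m-1}$, so $B_mB_{m-1}B_{m+1}B_m$ and $B_mB_{m-1}B_m$ unfold into explicit concatenations of $A_{m-1}$'s and $B_{m-1}$'s, reducing the $B$-occurrence count to an $A$-occurrence count one level down, hence to items (1) and (2) together with the induction hypothesis.

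The main obstacle I anticipate is the uniqueness (no-other-occurrence) part, not the existence part: one must carefully control partial overlaps where a copy of $A_m$ straddles the boundary between two blocks. The key technical lemma needed is a "synchronization" statement: if $A_m$ (or $B_m$) occurs starting at position $j$ inside a concatenation of blocks of length $2^k$ (for suitable $k\le m$), then $j\equiv 1\pmod{2^k}$, because otherwise the shared-prefix-but-distinct-suffix property of $A$'s and $B$'s at every level would be violated somewhere in the first $2^k$ letters of the purported occurrence. Once this alignment is forced, each of (1)–(4) collapses to a finite check plus the inductive hypothesis. I would state and prove this synchronization fact first (or fold it into the induction), then handle (1), (2) directly, and finally derive (3), (4) by the $B_{m+1}=A_mA_m$ substitution. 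The base cases $m=1$ (and $m=2$ where needed) should be verified explicitly against $D_\infty=abaaabababaaabaa\cdots$.
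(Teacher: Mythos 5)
The paper states this lemma without any proof (it is followed only by the remark that it will be used in Theorems 4.1--4.3), so there is no argument of the authors' to compare yours against; I can only assess your plan on its own terms. Its skeleton is right: existence of the two claimed occurrences is immediate from $A_{m+1}=A_mB_m$ and $B_{m+1}=A_mA_m$; the real content is excluding stray occurrences; and routing (3),(4) through $B_m=A_{m-1}A_{m-1}$ is the correct move. Indeed $B_mB_{m-1}B_{m+1}B_m$ and $B_mB_{m-1}B_m$ become the block words $AABABABAA$ and $AABAA$ over $\{A_{m-1},B_{m-1}\}$, and since $A_{m-1}$ and $B_{m-1}$ agree except in their last letter, among block-aligned positions only the two claimed ones can spell $A_{m-1}A_{m-1}$.

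There is, however, one concretely false step. Your synchronization lemma fails for $B_m$ as stated: $B_1=aa$ occurs in $B_1B_1=aaaa$ at position $2\not\equiv 1\pmod 2$, so the parenthetical ``(or $B_m$)'' must go. The lemma is true only for factors beginning with $ab$, i.e.\ for $A_k$ with $k\geq 1$ (and hence for $B_k$ with $k\geq 2$ via its prefix $A_{k-1}$); the $m=1$ instances of (3) and (4) must then be checked by hand, as you anticipate. Relatedly, the mechanism you invoke for synchronization --- the shared-prefix/distinct-last-letter property --- is not what forces alignment; that property is what distinguishes $A_{m-1}$ from $B_{m-1}$ \emph{after} alignment is known. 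What actually forces alignment is that in any image $\sigma(w)$ the letter $b$ occurs only at even positions, hence $ab$ occurs only at odd positions, hence so does any $A_k$ with $k\geq1$; injectivity of $\sigma$ then lets you de-substitute an aligned occurrence of $\sigma(u)$ in $\sigma(w)$ to an occurrence of $u$ in $w$, and iterating reduces (1),(2) to counting the letter $a$ in $aba$ and $aa$, and (3),(4) to counting the factor $aa$ in $aababab aa$ and $aabaa$. With the lemma restated in that restricted form and that justification, your plan closes.
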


These properties above will be used in the proofs of Theorem 4.1, 4.2 and 4.3.


\vspace{0.5cm}

\stepcounter{section}

\noindent\textbf{\large{3.~Envelope Word $E_{m,i}$}}

\vspace{0.4cm}

In this section, we will discuss a particular kind of factor, called envelope word, denoted by $E_{m,i}$. We will introduce some definitions: envelope word $E_{m,i}$, envelope set $\mathcal{E}$, envelope word of factor $\omega$, denoted by $Env(\omega)$, etc.

\begin{definition}[Envelope set and envelope word in Doubling sequence]\

Envelope set $\mathcal{E}:=\{E_{m,i},~m\geq1,~i=1,2\}$, where
$$E_{m,1}=A_m\delta_m^{-1},~E_{m,2}=A_{m-1}A_m\delta_m^{-1}.$$
The element $E_{m,i}$ in $\mathcal{E}$ is called an envelope word in Doubling sequence.
\end{definition}

\begin{property} For $m\geq1$ and $i=1,2$, $E_{m+1,i}=\sigma(E_{m,i})a$.
\end{property}

\noindent\textbf{Example.} $E_{1,1}=a$, $E_{2,1}=aba$, $E_{3,1}=abaaaba
$; $E_{1,2}=aa$, $E_{2,2}=ababa$, $E_{3,2}=abaaabaaaba$.

\begin{property} For $m\geq1$,
$E_{m+1,1}=E_{m,1}\delta_{m}E_{m,1}$ and
$E_{m+1,2}=E_{m,1}\delta_{m}E_{m,1}\delta_{m}E_{m,1}$.
\end{property}

\begin{property}[Palindrome] Envelope word $E_{m,i}$ is a palindrome for $m\geq1$, $i=1,2$.\end{property}
%

\begin{definition}[Order in Envelope Set $\mathcal{E}$]\

Let $E_{m_1,i_1}$ and $E_{m_2,i_2}$ be two elements in envelope set $\mathcal{E}$.

(1) if $m_1<m_2$ then $E_{m_1,i_1}\sqsubset E_{m_2,i_2}$;

(2) if $m_1=m_2$ and $i_1<i_2$ then $E_{m_1,i_1}\sqsubset E_{m_2,i_2}$.
\end{definition}

\noindent\textbf{Example.} $E_{1,1}\sqsubset E_{1,2}\sqsubset E_{2,1}$.

\begin{definition}[Envelope word of factor $\omega$]\

The envelope word of factor $\omega\in D_\infty$, denoted by $Env(\omega):=E_{m,i}$, is the minimal element in envelope set $\mathcal{E}$ under the order $"\sqsubset"$ subject to $\omega\prec E_{m,i}$.
\end{definition}

\noindent\textbf{Example.} $Env(bab)=ababa=E_{2,2}$, $Env(aaa)=abaaaba=E_{3,1}$.

\begin{Theorem}[Uniqueness of envelope extension, weak version]\

Let $Env(\omega)=E_{m,i}$, $i=1,2$, then $\omega$ has an unique envelope extension as
$$E_{m,i}=\mu_1(\omega)\ast \omega\ast\mu_2(\omega),$$
where $\mu_1(\omega)$ and $\mu_2(\omega)$ are constant words depending only on $\omega$.
\end{Theorem}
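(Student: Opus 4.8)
The plan is to prove existence and uniqueness separately. For existence, recall that $Env(\omega)=E_{m,i}$ means $E_{m,i}$ is the $\sqsubset$-minimal envelope word containing $\omega$ as a factor; in particular $\omega\prec E_{m,i}$, so we may fix one occurrence $E_{m,i}=\mu_1\ast\omega\ast\mu_2$ with $\mu_1,\mu_2$ words. The content of the theorem is that this occurrence is the \emph{only} one, so that $\mu_1=\mu_1(\omega)$ and $\mu_2=\mu_2(\omega)$ are well defined by $\omega$ alone. So the real work is uniqueness: showing $\omega$ occurs in $E_{m,i}$ exactly once.

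\medskip

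First I would exploit the palindrome structure: by Property 3.4, $E_{m,i}$ is a palindrome, so if $\omega$ occurred at two distinct positions $p<q$ in $E_{m,i}$, then by reflection $\omega$ would also occur at the two mirrored positions, and moreover the number of occurrences is preserved under reversal. Second, and more decisively, I would use the recursive decompositions from Property 3.3, namely $E_{m+1,1}=E_{m,1}\,\delta_m\,E_{m,1}$ and $E_{m+1,2}=E_{m,1}\,\delta_m\,E_{m,1}\,\delta_m\,E_{m,1}$, and set up an induction on $m$ (with base cases $m=1$, where $E_{1,1}=a$, $E_{1,2}=aa$ are trivially checked). Suppose $Env(\omega)=E_{m+1,i}$. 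Minimality of $Env$ forces $\omega\not\prec E_{m,1}$ (and, if $i=2$, also $\omega\not\prec E_{m,2}$, but note $E_{m,2}=E_{m-1,1}\delta_{m-1}E_{m-1,1}\delta_{m-1}E_{m-1,1}$ may not literally appear inside $E_{m+1,1}$, so the relevant blocks to rule out are copies of $E_{m,1}$). Hence any occurrence of $\omega$ in $E_{m+1,i}$ must straddle one of the ``joints'' $\delta_m$ between consecutive $E_{m,1}$-blocks: it starts inside one $E_{m,1}$-block and ends inside the next.

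\medskip

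The crux is then a \emph{separation} argument showing two such straddling occurrences cannot coexist. Here I would invoke the Positions Lemma (Lemma 2.5): the copies of $A_m$ (equivalently of $E_{m,1}=A_m\delta_m^{-1}$, after removing the final letter) inside $A_mB_mA_m$ and inside $A_mA_m$ sit at precisely the positions $1$ and $2^{m+1}+1$ (resp. $1$ and $2^m+1$), with no others. Since $E_{m+1,1}$ and the $A_m\cdots$-words are essentially the same blocks up to the trailing $\delta$-letters, an occurrence of $\omega$ straddling the joint of $E_{m+1,i}$ determines, by its overlap with the two $E_{m,1}$-copies, which among the finitely many admissible positions it occupies; Lemma 2.5 pins this down to a single position. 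Concretely, a straddling $\omega$ has a prefix that is a proper suffix of $E_{m,1}$ and a suffix that is a proper prefix of $E_{m,1}$, and the length $|\mu_1(\omega)|$ is then forced by how deep into the first block $\omega$ must start, which Lemma 2.5 shows is unique. I would carry out the $i=1$ and $i=2$ cases in parallel, the $i=2$ case needing the extra observation that $\omega$ cannot be contained in either of the two overlapping $E_{m+1,1}$-subwords of $E_{m+1,2}$ lest $Env(\omega)\sqsubset E_{m+1,2}$, contradicting minimality.

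\medskip

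The main obstacle I anticipate is the bookkeeping at the joints: one must carefully verify that a straddling $\omega$ genuinely ``sees enough'' of both adjacent $E_{m,1}$-blocks to let Lemma 2.5 apply, i.e. that $\omega$ extends strictly into each neighbouring block rather than, say, nesting entirely within one block plus a single joint letter — but this is exactly guaranteed by $\omega\not\prec E_{m,1}$ together with $\omega$ being longer than any single block boundary region. A secondary subtlety is that the present statement is only the \emph{weak} version; it asserts $\mu_1,\mu_2$ depend only on $\omega$ but does not yet claim the strong ``context-freeness'' (that these are the same whenever $\omega$ appears anywhere in $D_\infty$), which is deferred to Section 5, so I would be careful not to over-claim and only prove uniqueness of the occurrence \emph{inside the single word $E_{m,i}$}.
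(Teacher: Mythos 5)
Your reduction is fine as far as it goes: existence is immediate from the definition of $Env$, and since $Env(\omega)=E_{m+1,i}$ forces $\omega\not\prec E_{m,1}$, every occurrence of $\omega$ in $E_{m+1,i}=E_{m,1}\delta_mE_{m,1}(\delta_mE_{m,1})$ must cover a joint letter $\delta_m$. But that is only the easy half, and the step you lean on to finish --- ``Lemma 2.5 pins this down to a single position'' --- does not do the work you need. Lemma 2.5 locates the full blocks $A_m$, $B_m$ inside specific concatenations such as $A_mB_mA_m$ and $A_mA_m$; it says nothing about where a \emph{proper factor} that straddles a joint can sit. Note in particular that for $i=1$ there is only one joint, so two hypothetical occurrences of $\omega$ both cover the \emph{same} position $2^m$, and ``straddling'' provides no separation between them at all: the uniqueness of the straddling position is exactly the content of the theorem, and your appeal to Lemma 2.5 is circular at this point.

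The concrete missing ingredient is the constraint you explicitly set aside, namely the copies of type-2 envelope words that sit \emph{across} the joint. By Lemmas 3.8(3)/3.9(3), $E_{m-1,2}$ does occur inside $E_{m+1,1}$, and its occurrences straddle the joint; minimality of $Env(\omega)$ therefore also rules out $\omega\prec E_{m-1,2}$, and it is this extra exclusion (not merely $\omega\not\prec E_{m,1}$) that forces $\omega$ to contain a specific marker word of the form $\delta_{m-2}A_{m-2}$ (resp. $\delta_{m-1}A_{m-3}$ at the paper's indexing). The paper then (a) counts the occurrences of that marker inside $E_{m,i}$ by refining the decomposition down to $A_{m-3},B_{m-3}$ (resp. $A_{m-2},B_{m-2}$) blocks --- there are at most two --- and (b) shows that if $\omega$ occurred twice, the two marker positions would force $\omega$ into a smaller envelope word ($E_{m-2,2}$), contradicting minimality. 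Your proposal supplies none of (a) or (b): without the marker you cannot bound the set of admissible straddling positions, and without the ``two occurrences $\Rightarrow$ $\omega$ lies in a smaller envelope word'' step you have no contradiction. The palindrome symmetry you invoke at the start is likewise inert here (two occurrences could be mirror images of each other, which is consistent with palindromicity), so as written the proof does not close.
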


\begin{proof}
\textbf{Part 1.} When $i=1$, $E_{m,1}=A_m\delta_m^{-1}$. 
It contains $E_{m-1,1}$ and $E_{m-2,2}$ as subwords.
\setlength{\unitlength}{1mm}
\begin{center}
\begin{picture}(125,40)
\put(0,10){$E_{m,1}$}
\put(10,10){$=$}
\put(15,10){$A_{m-2}$}
\put(25,10){$A_{m-3}$}
\put(35,10){$\delta_{m-1}^{-1}$}
\put(43,10){$\delta_{m-1}$}
\put(51,10){$A_{m-3}$}
\put(61,10){$\delta_{m-1}^{-1}$}
\put(69,10){$\delta_{m-1}$}
\put(77,10){$A_{m-2}$}
\put(87,10){$\delta_{m-2}^{-1}$}
\put(95,10){$\delta_{m-2}$}
\put(103,10){$A_{m-2}$}
\put(113,10){$\delta_{m-2}^{-1}$}
\put(15,8){\line(1,0){54}}
\put(15,8){\line(0,1){2}}
\put(69,8){\line(0,1){2}}
\put(42,6){\line(0,1){2}}
\put(24,2){$A_{m-1}\delta_{m-1}^{-1}=E_{m-1,1}$}
\put(77,8){\line(1,0){44}}
\put(77,8){\line(0,1){2}}
\put(121,8){\line(0,1){2}}
\put(101,6){\line(0,1){2}}
\put(83,2){$A_{m-1}\delta_{m-1}^{-1}=E_{m-1,1}$}
\put(51,15){\line(1,0){44}}
\put(51,15){\line(0,-1){2}}
\put(95,15){\line(0,-1){2}}
\put(74,15){\line(0,1){2}}
\put(51,18){$A_{m-3}A_{m-2}\delta_{m-2}^{-1}=E_{m-2,2}$}
\put(43,24){\line(1,0){34}}
\multiput(43,24)(0,-3){4}{\line(0,-1){2}}
\put(77,24){\line(0,-1){2}}
\put(60,24){\line(0,1){2}}
\put(46,27){$\upsilon_1=\delta_{m-1}A_{m-3}$}
\put(69,33){\line(1,0){34}}
\put(69,33){\line(0,-1){2}}
\multiput(103,33)(0,-3){7}{\line(0,-1){2}}
\put(86,33){\line(0,1){2}}
\put(74,36){$\upsilon_2=\delta_{m-1}A_{m-2}$}
\end{picture}
\end{center}
\centerline{Fig. 3.1: The $E_{m-1,1}$ and $E_{m-2,2}$ in $E_{m,1}$.}

\vspace{0.2cm}

By the definition of $Env(\omega)=E_{m,1}$, $\omega$ must contain $\upsilon_1=\delta_{m-1}A_{m-3}$ or $\upsilon_2=\delta_{m-1}A_{m-2}$ as a subword, i.e. $\delta_{m-1}A_{m-3}\prec\omega$.
$$E_{m,1}=A_{m-3}B_{m-3}A_{m-3}A_{m-3}A_{m-3}B_{m-3}A_{m-3}A_{m-3}\delta_{m-3}^{-1},$$
so all possible positions of $\delta_{m-3}A_{m-3}$ in $E_{m,1}$ is $3\ast2^{m-3}$ and $2^{m-1}$.

Suppose factor $\omega$ occurs twice in $E_{m,1}$, then factor $\delta_{m-3}A_{m-3}$ in $\omega$ occurs at the two positions above.
Since the two $\omega$ are equal, consider the structure around the two $\delta_{m-3}A_{m-3}$:
\setlength{\unitlength}{1mm}
\begin{center}
\begin{picture}(150,22)
\put(0,10){$E_{m,1}$}
\put(10,10){$=$}
\put(15,10){$A_{m-2}$}
\put(25,10){$A_{m-3}$}
\put(35,10){$\delta_{m-1}^{-1}$}
\put(43,10){$\delta_{m-1}$}
\put(51,10){$A_{m-3}$}
\put(61,10){$\delta_{m-1}^{-1}$}
\put(69,10){$\delta_{m-1}$}
\put(77,10){$A_{m-3}$}
\put(87,10){$\delta_{m-3}^{-1}$}
\put(95,10){$\delta_{m-3}$}
\put(103,10){$A_{m-3}$}
\put(113,10){$\delta_{m-3}^{-1}$}
\put(121,10){$\delta_{m-2}$}
\put(129,10){$A_{m-2}$}
\put(139,10){$\delta_{m-2}^{-1}$}
\linethickness{1.5pt}
\put(43,8){\line(1,0){34}}
\put(69,15){\line(1,0){18}}
\linethickness{0.5pt}
\put(43,8){\line(0,1){2}}
\put(77,8){\line(0,1){2}}
\put(60,8){\line(0,-1){2}}
\put(46,2){$\upsilon_1=\delta_{m-1}A_{m-3}$}
\put(77,8){\line(1,0){18}}
\put(43,8){\line(-1,0){18}}
\put(69,15){\line(0,-1){2}}
\put(87,15){\line(0,-1){2}}
\put(78,15){\line(0,1){2}}
\put(64,18){$\upsilon_1=\delta_{m-1}A_{m-3}$}
\put(87,15){\line(1,0){34}}
\put(69,15){\line(-1,0){18}}
\end{picture}
\end{center}
\centerline{Fig. 3.2: The structure around the two $\delta_{m-3}A_{m-3}$.}
\vspace{0.2cm}

So we get that
$$\omega\prec A_{m-3}A_{m-3}A_{m-3}\delta_{m-3}^{-1}=A_{m-3}A_{m-2}\delta_{m-2}^{-1}=E_{m-2,2}\sqsubset E_{m,1}.$$
It contradicts the definition of $Env(\omega)=E_{m,1}$.

This means $\omega$ occurs in $E_{m,1}$ only once, i.e. the envelope extension is unique.

\vspace{0.2cm}

\textbf{Part 2.} When $i=2$, $E_{m,2}=A_{m-1}A_m\delta_m^{-1}$.
It contains $E_{m,1}$ as subwords.
\setlength{\unitlength}{1mm}
\begin{center}
\begin{picture}(90,32)
\put(0,10){$E_{m,2}$}
\put(10,10){$=$}
\put(15,10){$A_{m-1}$}
\put(25,10){$\delta_{m-1}^{-1}$}
\put(33,10){$\delta_{m-1}$}
\put(41,10){$A_{m-1}$}
\put(51,10){$\delta_{m-1}^{-1}$}
\put(59,10){$\delta_{m-1}$}
\put(67,10){$B_{m-1}$}
\put(77,10){$\delta_{m}^{-1}$}
\put(15,8){\line(1,0){44}}
\put(15,8){\line(0,1){2}}
\put(59,8){\line(0,1){2}}
\put(37,6){\line(0,1){2}}
\put(24,2){$A_{m}\delta_{m}^{-1}=E_{m,1}$}
\put(41,15){\line(1,0){42}}
\put(41,15){\line(0,-1){2}}
\put(83,15){\line(0,-1){2}}
\put(62,15){\line(0,1){2}}
\put(50,18){$A_{m}\delta_{m}^{-1}=E_{m,1}$}
\put(33,24){\line(1,0){34}}
\multiput(33,24)(0,-3){4}{\line(0,-1){2}}
\put(67,24){\line(0,-1){2}}
\put(50,24){\line(0,1){2}}
\put(42,27){$\delta_{m-1}A_{m-1}$}
\end{picture}
\end{center}
\centerline{Fig. 3.3: The $E_{m,1}$ occurs in $E_{m,2}$.}

\vspace{0.2cm}

By the definition of $Env(\omega)=E_{m,2}$, $\omega$ must contain $\delta_{m-1}A_{m-3}$ or $\upsilon_2=\delta_{m-1}A_{m-1}$ as a subword, i.e. $\delta_{m-1}A_{m-1}\prec\omega$.
$$E_{m,2}=A_{m-2}B_{m-2}A_{m-2}B_{m-2}A_{m-2}A_{m-2}\delta_m^{-1}.$$
So all possible positions of $\delta_{m-1}\ast A_{m-1}$ in $E_{m,1}$ is $2^{m-1}$.

This means $\omega$ occurs in $E_{m,2}$ only once, i.e. the envelope extension is unique.
\end{proof}

\begin{lemma}[Positions] For $m\geq1$,

(1) $E_{m,1}$ occurs in $E_{m+1,1}$ twice at positions 1 and $2^m+1$;

(2) $E_{m,1}$ occurs in $E_{m,2}$ twice at positions 1 and $2^{m-1}+1$;

(3) $E_{m,2}$ occurs in $E_{m+2,1}$ twice at positions $2^m+1$ and $3\ast2^{m-1}+1$;

(4) $E_{m,2}$ occurs in $E_{m+2,2}$ four times:
$2^m+1$, $3\ast2^{m-1}+1$, $3\ast2^m+1$ and $7\ast2^{m-1}+1$.
\end{lemma}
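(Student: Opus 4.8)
The plan is to reduce all four statements to the elementary "Positions" facts of Lemma 2.5 about occurrences of $A_m$ and $B_m$, using the recursive descriptions of envelope words from Properties 3.2 and 3.4 together with the palindrome identity $A_m\delta_m^{-1}=B_m\delta_{m+1}^{-1}$ from Proposition 2.3. The underlying idea is that each $E_{m,i}$ is, up to deleting one final letter, a copy of $A_m$ (for $i=1$) or of $A_{m-1}A_m$ (for $i=2$), so counting occurrences of $E_{m,i}$ inside a larger envelope word is essentially counting occurrences of $A_m$ (equivalently $B_m$, via Proposition 2.3) inside a short concatenation of blocks $A_k, B_k$, which is exactly what Lemma 2.5 handles.

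First I would prove (1). By Property 3.4 we have $E_{m+1,1}=E_{m,1}\,\delta_m\,E_{m,1}$, so there are occurrences of $E_{m,1}$ at positions $1$ and $|E_{m,1}|+1 = (2^m-1)+1+1 = 2^m+1$; it remains to show there is no third occurrence. Here I would appeal to the weak uniqueness theorem (Theorem 3.9) in the form: any occurrence of $E_{m,1}$ inside the longer word forces, in $E_{m,1}$'s distinguished subword $\delta_{m-1}A_{m-3}$ (or $\delta_{m-1}A_{m-1}$ in the $i=2$ case), a position that Lemma 2.5 restricts to the two listed spots — exactly the argument already used in the proof of Theorem 3.9. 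For (2), use Property 3.1: $E_{m,2}=A_{m-1}A_m\delta_m^{-1}$ and $E_{m,1}=A_m\delta_m^{-1}$, so $E_{m,1}$ occurs at position $1$ of $A_mA_m\delta_m^{-1}$ — wait, rather I would write $E_{m,2}=A_{m-1}\cdot A_m\delta_m^{-1}$ and use $A_{m-1}\delta_{m-1}^{-1}=B_{m-1}\delta_m^{-1}$ to rewrite the prefix so that $E_{m,1}=A_m\delta_m^{-1}$ is seen to start at position $1$ and at position $|A_{m-1}|+1 = 2^{m-1}+1$; no other occurrence exists because $E_{m,1}$ contains $A_{m-1}$ and Lemma 2.5(2) pins down the occurrences of $A_{m-1}$ inside the relevant block to these two.

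For (3) and (4), the same strategy applies but now the ambient word is two levels up. Using Property 3.2 twice (or equivalently the block expansions $E_{m+2,1}=A_{m+2}\delta_{m+2}^{-1}$ written as a concatenation of $A_m, B_m$ blocks, and $E_{m+2,2}=A_{m+1}A_{m+2}\delta_{m+2}^{-1}$ similarly), I would expand $E_{m+2,1}$ and $E_{m+2,2}$ into words over the blocks $A_m, B_m$ exactly as is done in the figures of Section 3, locate the subword $E_{m,2}=A_{m-1}A_m\delta_m^{-1}$ — equivalently, by Proposition 2.3, the subword $B_{m-1}B_m$ suitably — and then invoke Lemma 2.5(3) and 2.5(4): the occurrences of $B_m$ in $B_mB_{m-1}B_mB_m$-type patterns give positions $1$ and $3\cdot2^{m-1}+1$ relative to the local block, which translate to the claimed global positions $2^m+1$ and $3\cdot2^{m-1}+1$ for $E_{m+2,1}$, and the pattern $B_mB_{m-1}B_{m+1}B_m$ gives the four positions $2^m+1$, $3\cdot2^{m-1}+1$, $3\cdot2^m+1$, $7\cdot2^{m-1}+1$ for $E_{m+2,2}$. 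Throughout, the upper bound "only these occurrences" follows from Theorem 3.9 applied to $E_{m,2}$ itself, since $E_{m,2}$ is a factor of $D_\infty$ and its own envelope word is $E_{m,2}$, so its occurrences in any factor are as constrained as Lemma 2.5 allows.

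The main obstacle I expect is bookkeeping of offsets: translating the "local" positions coming from Lemma 2.5 (which are stated relative to a short pattern like $A_mB_mA_mA_m$) into the "global" positions inside $E_{m+2,i}$ requires carefully tracking how many length-$2^m$ blocks precede the relevant sub-pattern, and being consistent about the "$-\delta^{-1}$" truncation that distinguishes $E_{m,i}$ from a pure block. This is routine but error-prone; I would handle it by fixing once and for all the block decompositions $A_{m+2}=A_mB_mA_mA_m$ and $B_{m+2}=A_mB_mA_mB_m$ (from $\sigma^2$) and computing each offset as a sum of block lengths, checking the arithmetic against the small cases $m=1,2$ to guard against off-by-one errors.
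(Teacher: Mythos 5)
The paper states this lemma without proof, so there is no written argument to compare against; your plan follows the same methodology the paper uses where it does give proofs (block decompositions of $A_{m},B_{m}$ plus marker-factor counting, as in Theorem 3.7 and Theorems 4.1--4.3), and the \emph{existence} half is handled correctly: $E_{m+1,1}=E_{m,1}\delta_m E_{m,1}$, $E_{m,2}=A_{m-1}A_{m-1}A_{m-1}\delta_{m-1}^{-1}$, the $\sigma^2$-expansions of $E_{m+2,1}$ and $E_{m+2,2}$, and all the offsets you list check out.

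The shaky part is the \emph{exclusion} step (that there are no further occurrences), and two of your stated justifications do not work. First, the weak uniqueness theorem (Theorem 3.7) applied to $E_{m,2}$ itself is vacuous: since $Env(E_{m,2})=E_{m,2}$, it only says $E_{m,2}$ occurs once in $E_{m,2}$, and it gives no control over occurrences of $E_{m,2}$ inside the strictly larger words $E_{m+2,i}$. Second, Lemma 2.5 counts occurrences of the full blocks $A_m$ and $B_m$, whereas $E_{m,i}$ is such a block (or pair of blocks) with its last letter deleted; an occurrence of $E_{m,i}$ need not extend to an occurrence of $A_m$ or $B_m$ --- the occurrence of $E_{m,1}$ at position $2^m+1$ of $E_{m+1,1}=A_mA_m\delta_m^{-1}$ is exactly one that cannot be so extended --- so Lemma 2.5 cannot be invoked on $E_{m,i}$ directly. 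What does work, and what you correctly gesture at, is the marker argument from the proof of Theorem 3.7: fix an internal factor of $E_{m,i}$ (such as $\delta_{m-1}A_{m-2}$ for $E_{m,1}$, or $\delta_{m-1}A_{m-1}$ for $E_{m,2}$), enumerate its possible positions in the block decomposition of each ambient word, and check which positions extend to a full copy of $E_{m,i}$. That enumeration must actually be carried out separately for each of $E_{m+1,1}$, $E_{m,2}$, $E_{m+2,1}$, $E_{m+2,2}$ (none of these is literally one of the four patterns of Lemma 2.5), and the small cases $m=1,2$, where $A_{m-3}$ is undefined, need direct verification. With those computations supplied your outline becomes a complete proof; as written, the upper bound rests partly on an inapplicable theorem and is only a sketch.
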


\begin{lemma} For $m\geq1$, $i=1,2$,

(1) $E_{m,1}\prec E_{m',i}$, $\forall m'\geq m$;

(2) $E_{m,2}\not\prec E_{m+1,i}$;

(3) $E_{m,2}\prec E_{m'+2,i}$, $\forall m'\geq m$.
\end{lemma}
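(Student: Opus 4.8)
The plan is to derive all three parts from the occurrence data in the envelope Positions lemma — the one asserting, for $m\ge1$, that $E_{m,1}$ occurs in $E_{m+1,1}$ exactly at positions $1$ and $2^m+1$, that $E_{m,1}$ occurs in $E_{m,2}$ exactly at positions $1$ and $2^{m-1}+1$, that $E_{m,2}$ occurs in $E_{m+2,1}$ exactly at positions $2^m+1$ and $3\cdot2^{m-1}+1$, and that $E_{m,2}$ occurs in $E_{m+2,2}$ at exactly four positions — together with the palindrome property of envelope words (Property 3.4) and the lengths $|E_{m,1}|=2^m-1$, $|E_{m,2}|=3\cdot2^{m-1}-1$. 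The only additional tool is transitivity of the factor relation $\prec$.

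Parts (1) and (3) are short chaining arguments. For (1): the Positions lemma gives $E_{m,1}\prec E_{m+1,1}$, hence $E_{m,1}\prec E_{m+1,1}\prec E_{m+2,1}\prec\cdots$, so $E_{m,1}\prec E_{m',1}$ for every $m'\ge m$; the Positions lemma also gives $E_{m',1}\prec E_{m',2}$ for every $m'\ge m$, so transitivity yields $E_{m,1}\prec E_{m',2}$, finishing the case $i=2$. For (3): the Positions lemma directly gives the base case $m'=m$, namely $E_{m,2}\prec E_{m+2,1}$ and $E_{m,2}\prec E_{m+2,2}$; for $m'>m$ we have $m'+2\ge m+2$, so part (1) applied to $E_{m+2,1}$ gives $E_{m+2,1}\prec E_{m'+2,i}$, and transitivity yields $E_{m,2}\prec E_{m+2,1}\prec E_{m'+2,i}$.

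Part (2) carries the real content, and I would prove it by contradiction through the positions of the shorter envelope word $E_{m,1}$. First, $E_{m,2}=A_{m-1}A_m\delta_m^{-1}=A_{m-1}\cdot E_{m,1}$, so $E_{m,1}$ is a suffix of $E_{m,2}$; since both $E_{m,1}$ and $E_{m,2}$ are palindromes, the palindromic suffix $E_{m,1}$ of the palindrome $E_{m,2}$ is also a prefix of it, and $|E_{m,2}|-|E_{m,1}|=2^{m-1}$. Hence any occurrence of $E_{m,2}$ at a position $j$ in a word $W$ forces occurrences of $E_{m,1}$ in $W$ at both $j$ and $j+2^{m-1}$. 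Next I determine the occurrence set of $E_{m,1}$ in $E_{m+1,i}$: for $i=1$ it is exactly $\{1,\,2^m+1\}$ by the Positions lemma; for $i=2$, the Positions lemma says $E_{m+1,1}$ occurs in $E_{m+1,2}$ exactly at positions $1$ and $2^m+1$, and since $|E_{m+1,1}|=2^{m+1}-1$ while $|E_{m+1,2}|=3\cdot2^m-1$, these two copies overlap and cover all of $E_{m+1,2}$, so every occurrence of the shorter word $E_{m,1}$ lies inside one of them; applying the Positions lemma inside each copy gives the occurrence set $\{1,\,2^m+1,\,2^{m+1}+1\}$. In both cases the pairwise distances between occurrences of $E_{m,1}$ are $2^m$, and for $i=2$ also $2^{m+1}$, never $2^{m-1}$; this contradicts having occurrences at $j$ and $j+2^{m-1}$. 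Therefore $E_{m,2}\not\prec E_{m+1,i}$.

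The only step needing honest verification is the covering claim in (2) — that the two copies of $E_{m+1,1}$ inside $E_{m+1,2}$ leave no length-$(2^m-1)$ window outside both — which is an elementary inequality in the lengths above, together with the smallest base cases (for instance $m=1$: $E_{1,2}=aa$ is plainly not a factor of $E_{2,1}=aba$ nor of $E_{2,2}=ababa$). I do not anticipate any obstacle beyond this bookkeeping.
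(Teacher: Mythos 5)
The paper states this lemma without any proof, so there is nothing to compare against line by line; judged on its own, your argument is correct and complete. Parts (1) and (3) are exactly the transitivity chains one would expect from Lemma 3.8, and part (2) is where you add genuine content: the observation that $E_{m,1}$ is simultaneously a prefix and a suffix of $E_{m,2}$ (via the palindrome property, Property 3.4, together with $E_{m,2}=A_{m-1}E_{m,1}$; one can also see it directly from Proposition 2.3), so that an occurrence of $E_{m,2}$ would force two occurrences of $E_{m,1}$ at distance $2^{m-1}$, whereas the occurrence sets of $E_{m,1}$ in $E_{m+1,1}$ and $E_{m+1,2}$ only realize the distances $2^m$ and $2^{m+1}$. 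The covering step for $E_{m+1,2}$ checks out: the two copies of $E_{m+1,1}$ start at $1$ and $2^m+1$ and have length $2^{m+1}-1$ inside a word of length $3\cdot 2^m-1$, so any window of length $2^m-1$ lies inside one of them, giving the occurrence set $\{1,\,2^m+1,\,2^{m+1}+1\}$. The one dependency worth making explicit is that you read Lemma 3.8's ``occurs twice at positions $\ldots$'' as an exhaustive list of occurrences; that is how the paper itself uses that lemma (e.g.\ in the proof of Property 3.10), so the reading is consistent, but since Lemma 3.8 is also stated without proof in the paper, the exactness claim is the one ingredient your argument inherits rather than establishes.
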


\begin{property}[Envelope set $\mathcal{E}$ is minimum]\

Envelope set $\mathcal{E}$ is minimum to ensure Theorem 3.7.
\end{property}

\begin{proof} Let $\mathcal{E}'$ be a proper subset of $\mathcal{E}$. $E_{m,i}$ is the minimal element of $\mathcal{E}-\mathcal{E}'$. Let
$$\mathcal{C}=\{E_{n,j}\in\mathcal{E}':E_{m,i}\prec E_{n,j}\}.$$
If $\mathcal{C}=\emptyset$, $E_{m,i}$ does not have envelope word.
It contradicts the definition of envelope word. 

If $\mathcal{C}\neq\emptyset$, $Env(E_{m,i})=\min\{E_{n,j}\in\mathcal{E}':E_{m,i}\prec E_{n,j}\}$, denoted by $E_{m',i'}$.

Case 1: $i=1$ and $m'>m$. By Lemma 3.9(1), $E_{m+1,1}\prec E_{m',i'}$. By Lemma 3.8(1), $E_{m,1}$ occurs in $E_{m+1,1}$ twice. So $E_{m,1}$ occurs in $Env(E_{m,1})$ more than once.

Case 2: $i=1$ and $m'=m$, then $E_{m',i'}=E_{m,2}$. By Lemma 3.8(2), $E_{m,1}$ occurs in $Env(E_{m,1})$ more than once.

Case 3: $i=2$. By Lemma 3.9(2), $m'>m+1$. By Lemma 3.9(1), $E_{m+2,1}\prec E_{m',i'}$. By Lemma 3.8(3), $E_{m,2}$ occurs in $E_{m+2,1}$ twice. So $E_{m,2}$ occurs in $Env(E_{m,1})$ more than once.

All cases contradict the weak version of ``uniqueness of envelope extension" in Theorem 3.7. So envelope set $\mathcal{E}$ is minimum to ensure Theorem 3.7.
\end{proof}


\vspace{0.5cm}

\stepcounter{section}

\noindent\textbf{\large{4.~Gaps and Gap Sequence of Envelope Word $E_{m,i}$}}

\vspace{0.4cm}

Each envelope word $E_{m,i}$ occurs in Doubling sequence infinitely many times. Let $E_{m,i,p}$ be the $p$-th occurrence of $E_{m,i}$, $G_p(E_{m,i})$ be the gap between $E_{m,i,p}$ and $E_{m,i,p+1}$, $G_0(E_{m,i})$ be the prefix of $D_\infty$ before $E_{m,i,1}$.

\begin{Theorem} Let $G_0(E_{m,i})$ be the prefix of $D_\infty$ before $E_{m,i,1}$, then

(1) $G_0(E_{m,1})=D_\infty[1,0]=\varepsilon$;

(2) $G_0(E_{m,2})=D_\infty[1,2^m]=A_m$.
\end{Theorem}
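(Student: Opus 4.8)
The plan is to locate the first occurrence of each envelope word $E_{m,i}$ as a prefix-type factor of $D_\infty$ and then read off the prefix that precedes it. For part (1), recall that $E_{m,1}=A_m\delta_m^{-1}$, which is a proper prefix of $A_m=\sigma^m(a)$, hence a prefix of $D_\infty$ itself (since $D_\infty$ begins with $A_m$ for every $m$). Therefore $E_{m,1}$ occurs at position $1$, so $L(E_{m,1},1)=1$ and $G_0(E_{m,1})=D_\infty[1,0]=\varepsilon$. I would spell this out by noting $D_\infty=A_m B_m A_m A_m\cdots$ from $\sigma^{m+1}(a)=\sigma^m(\sigma(a))=\sigma^m(ab)=A_mB_m$ and iterating, so no earlier occurrence is possible.

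For part (2) the task is to show that the first occurrence of $E_{m,2}=A_{m-1}A_m\delta_m^{-1}$ starts at position $2^m+1$, i.e. immediately after the prefix $A_m$. First I would check that $E_{m,2}$ does occur at position $2^m+1$: using $D_\infty=\sigma^{m+1}(a)\sigma^{m+1}(b)\cdots=A_mB_m A_m A_m\cdots$ we have $D_\infty[2^m+1,\,\cdot\,]$ beginning with $B_m A_m A_m\cdots$, and since $B_m\delta_{m-1}^{-1}=A_m\delta_m^{-1}$ (Proposition 2.3) one gets $B_mA_m$ has prefix $A_m\delta_m^{-1}\delta_{m-1}A_m = $ a word containing $A_{m-1}A_m\delta_m^{-1}$; more cleanly, $B_m=A_{m-1}A_{m-1}$ wait — actually $\sigma^{m-1}(b)=B_{m-1}$ and $B_m=\sigma(B_{m-1})$; I would instead use $B_m = \sigma^{m-1}(\sigma(b)) = \sigma^{m-1}(aa)=A_{m-1}A_{m-1}$, so $D_\infty[2^m+1,\,\cdot\,]$ starts with $A_{m-1}A_{m-1}A_m\cdots$, and since $A_{m-1}$ is a prefix of $A_m$ this string has prefix $A_{m-1}A_m\delta_m^{-1}=E_{m,2}$. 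Hence $E_{m,2}$ occurs at position $2^m+1$, giving $G_0(E_{m,2})=D_\infty[1,2^m]=A_m$ provided this is the \emph{first} occurrence.

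The main obstacle is therefore establishing minimality: that $E_{m,2}$ does not occur anywhere in $D_\infty[1,2^m+|E_{m,2}|-1]$ before position $2^m+1$. Here I would use the envelope machinery: since $Env(E_{m,2})=E_{m,2}$ (an envelope word is its own envelope word), Theorem 3.7 gives that $E_{m,2}$ occurs exactly once in itself with a fixed extension, and more usefully the structural picture of Figure 3.3 together with Lemma 2.8 pins down all possible positions of the marker $\delta_{m-1}A_{m-1}$. Concretely, $E_{m,2}$ begins with $A_{m-1}$ which is a prefix of $A_m$, but $E_{m,2}$ contains $A_{m-1}A_{m-1}$ as a prefix whereas $A_m=A_{m-1}B_{m-1}$ and $B_{m-1}\neq A_{m-1}$ (they differ already in a controlled position, e.g. compare $\delta$-letters via Property 2.2 since $\sigma(a)=ab\neq aa=\sigma(b)$); so $E_{m,2}$ cannot start within the initial block $A_m=D_\infty[1,2^m]$ at any position $<2^m+1$ except possibly positions internal to $A_m$, which I would rule out by the positions lemma (Lemma 2.8(2)–(3) applied to the $A_{m-1}A_{m-1}$ pattern, whose only occurrences inside $A_m B_m$ start at $1$ — impossible since there the continuation is $A_{m-1}B_{m-1}$ — and at $2^m+1$). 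Thus position $2^m+1$ is the first occurrence and $G_0(E_{m,2})=A_m$. I expect the bookkeeping of exactly which internal positions of $A_m$ could host a prefix of $E_{m,2}$ to be the only delicate point, and it is handled entirely by Lemma 2.8.
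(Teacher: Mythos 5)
Your proposal is correct and follows essentially the same route as the paper: part (1) is immediate because $E_{m,1}=A_m\delta_m^{-1}$ is a prefix of $D_\infty$, and for part (2) you verify the occurrence of $E_{m,2}$ at position $2^m+1$ via $B_m=A_{m-1}A_{m-1}$ and Proposition 2.3, then restrict the leading $A_{m-1}$ to positions $1$ and $2^m+1$ by the positions lemma (which is Lemma 2.5(1), not ``Lemma 2.8'' as you cite it) and exclude position $1$ because the continuation there is $B_{m-1}\neq A_{m-1}$ --- exactly the paper's argument, which makes the same exclusion by comparing the $2^m$-th letter $\delta_m$ versus $\delta_{m-1}$. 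Apart from the citation slip and some hedging in the middle of part (2), the mathematics matches the paper's proof.
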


\begin{proof} (1) Since $D_\infty[1,2^m-1]=A_m\delta_m^{-1}=E_{m,1}$, $G_0(E_{m,1})=\varepsilon$.

(2) Since $D_\infty[1,5\ast2^{m-1}-1]=A_{m-1}B_{m-1}A_{m-1}A_{m-1}A_{m-1}\delta_{m+1}
=A_{m}A_{m-1}B_{m}\delta_{m-1}$. By Proposition 2.3, $B_{m}\delta_{m+1}=A_{m}\delta_m$,
so $D_\infty[2^m+1,5\ast2^{m-1}-1]=A_{m-1}A_m\delta_m^{-1}=E_{m,2}$.

Consider the first $A_{m-1}$ in $\underline{A_{m-1}}A_m\delta_m^{-1}$. By Lemma 2.5(1), it
occurs in $A_{m-1}B_{m-1}A_{m-1}$ twice at positions 1 and $2^{m}+1$.
Suppose this $A_{m-1}$ occurs at positions 1, then the $2^m$-th letter in $A_{m-1}B_{m-1}A_{m-1}$ is $\delta_m$. But the letter in $A_{m-1}A_m\delta_m^{-1}=A_{m-1}A_{m-1}B_{m-1}\delta_m^{-1}$ at the same position is $\delta_{m-1}$. It contradicts $\delta_m\neq\delta_{m-1}$.

This means $D_\infty[2^m+1,5\ast2^{m-1}-1]=E_{m,2,1}$ and $G_0(E_{m,2})=A_m$.
\end{proof}

\begin{Theorem}[The first two gaps of $E_{m,1}$] For $m\geq1$,

(1) $G_1(E_{m,1})=\delta_m$;

(2) $G_2(E_{m,1})=\delta_{m-1}A_{m-1}^{-1}$.
\end{Theorem}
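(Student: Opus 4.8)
The whole statement reduces to locating the first three occurrences of $E_{m,1}$ in $D_\infty$: once $L(E_{m,1},1)$, $L(E_{m,1},2)$ and $L(E_{m,1},3)$ are known, both gaps follow at once from the definition of $G_p$. So the plan is to establish $L(E_{m,1},1)=1$, $L(E_{m,1},2)=2^m+1$ and $L(E_{m,1},3)=3\cdot2^{m-1}+1$, and then read off $G_1$ and $G_2$.

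For part (1), Theorem 4.1(1) already gives $L(E_{m,1},1)=1$, so $E_{m,1,1}=D_\infty[1,2^m-1]$. To get the second occurrence I would apply Theorem 4.1(1) at index $m+1$ together with Property 3.3, which yield $D_\infty[1,2^{m+1}-1]=E_{m+1,1}=E_{m,1}\,\delta_m\,E_{m,1}$. Any occurrence of $E_{m,1}$ beginning at a position $p$ with $2\le p\le 2^m+1$ lies entirely inside this explicitly known prefix, hence by the occurrence list of Lemma 3.8(1) it must begin at $1$ or at $2^m+1$; therefore $L(E_{m,1},2)=2^m+1$. Since $D_\infty[1,2^m]=A_m=E_{m,1}\delta_m$, the single letter squeezed between $E_{m,1,1}$ and $E_{m,1,2}$ is $D_\infty[2^m]=\delta_m$, which is $G_1(E_{m,1})$.

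For part (2) everything turns on the third occurrence. Here I would use Theorem 4.1(2): $E_{m,2}$ first occurs at position $2^m+1$, that is $D_\infty[2^m+1,5\cdot2^{m-1}-1]=E_{m,2}$. By Lemma 3.8(2), $E_{m,1}$ occurs inside $E_{m,2}$ exactly at the relative positions $1$ and $2^{m-1}+1$, i.e. at the $D_\infty$-positions $2^m+1$ and $3\cdot2^{m-1}+1$. To see that no occurrence of $E_{m,1}$ starts strictly between these, I would rule out a start $p$ with $2^m+1<p\le3\cdot2^{m-1}$: such an occurrence begins strictly inside $E_{m,2,1}$ at relative position $p-2^m\in\{2,\dots,2^{m-1}\}$ and, having length $2^m-1$, ends at relative position $p-2\le3\cdot2^{m-1}-2<|E_{m,2}|$, so it lies entirely inside $E_{m,2}$, contradicting Lemma 3.8(2). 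Hence $L(E_{m,1},3)=3\cdot2^{m-1}+1$ and $E_{m,1,3}=D_\infty[3\cdot2^{m-1}+1,5\cdot2^{m-1}-1]$.

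It remains to compute $G_2$. From the three positions, $E_{m,1,2}=D_\infty[2^m+1,2^{m+1}-1]$ and $E_{m,1,3}=D_\infty[3\cdot2^{m-1}+1,5\cdot2^{m-1}-1]$, so the second and third occurrences overlap along the factor $D_\infty[3\cdot2^{m-1}+1,2^{m+1}-1]$, of length $2^{m-1}-1$ (degenerating to the adjacent case $G_2(E_{m,1})=\varepsilon$ when $m=1$). Using the elementary block identities $D_\infty[1,2^{m+1}]=A_{m+1}=A_mB_m$ and $B_m=A_{m-1}A_{m-1}$, one reads off $D_\infty[3\cdot2^{m-1}+1,2^{m+1}]=A_{m-1}$, so the overlapped factor equals $A_{m-1}\delta_{m-1}^{-1}$; by the overlapped case of the definition of $G_p$ this gives $G_2(E_{m,1})=(A_{m-1}\delta_{m-1}^{-1})^{-1}=\delta_{m-1}A_{m-1}^{-1}$, as claimed, and for $m=1$ this reads $A_0\delta_0^{-1}=\varepsilon$, consistently. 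The only genuinely delicate point in the whole argument is the exclusion of unwanted occurrences strictly between the expected ones, and each time it reduces to fitting a hypothetical occurrence into a prefix of $D_\infty$ that Theorem 4.1 already describes explicitly --- $E_{m+1,1}$ for the second occurrence and $E_{m,2}$ for the third --- and then quoting the exhaustive occurrence lists of Lemma 3.8.
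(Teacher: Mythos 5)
Your proposal is correct and follows essentially the same route as the paper: both arguments pin down the first three occurrences of $E_{m,1}$ (at positions $1$, $2^m+1$ and $3\cdot2^{m-1}+1$) by embedding any candidate occurrence into an explicit prefix of $D_\infty$ and quoting an exhaustive position list, then read $G_1$ and $G_2$ off the definition of gap. The only cosmetic difference is that you exclude intermediate occurrences via Lemma 3.8 (positions of $E_{m,1}$ in $E_{m+1,1}$ and $E_{m,2}$) where the paper tracks the prefix $A_{m-1}$ of $E_{m,1}$ and uses Lemma 2.5.
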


\begin{proof} By the definition of envelope word, $E_{m,1}=A_m\delta_m^{-1}$. 
By Property 2.3, $B_m\delta_{m+1}^{-1}=A_m\delta_{m}$,
$$D_\infty[1,2^{m+1}-1]=A_mB_m\delta_{m-1}^{-1}=A_mA_m\delta_{m}^{-1}
=A_m\delta_{m}^{-1}\ast\delta_{m}\ast A_m\delta_{m}^{-1}.$$
This means $D_\infty[2^m+1,2^{m+1}-1]=E_{m,1}$.

Consider the first $A_{m-1}$ in $E_{m,1}=A_m\delta_m^{-1}=\underline{A_{m-1}}B_{m-1}\delta_m^{-1}$. By Lemma 2.5(1), it
occurs in $D_\infty[1,3\ast2^{m}]=A_{m-1}B_{m-1}A_{m-1}$ twice at positions 1 and $2^{m}+1$.
When the $A_{m-1}$ occurs at positions 1, $E_{m,1}=D_\infty[1,2^m-1]$.
By the proof of Theorem 4.1(1), it is $E_{m,1,1}$, i.e,
$$E_{m,1,1}=D_\infty[1,2^m-1].$$

When the $A_{m-1}$ occurs at positions $2^{m}+1$, $E_{m,1}=D_\infty[2^m+1,2^{m+1}-1]$, it is $E_{m,1,2}$, i.e,
$$E_{m,1,2}=D_\infty[2^m+1,2^{m+1}-1].$$

Using similar arguments and by Lemma 2.5(2), we can determine the positions of $E_{m,1,3}$,
$$E_{m,1,3}=D_\infty[3\ast2^{m-1}+1,5\ast2^{m-1}-1].$$

This means:

$G_1(E_{m,1})=D_\infty[2^m]=\delta_m$;

$G_2(E_{m,1})=D_\infty[3\ast2^{m-1}+1,2^{m+1}-1]^{-1}=\delta_{m-1}A_{m-1}^{-1}$.
\end{proof}

\begin{Theorem}[The first four gaps of $E_{m,2}$] For $m\geq1$,

(1) $G_1(E_{m,2})=G_3(E_{m,2})=\delta_{m}A_{m}^{-1}$;

(2) $G_2(E_{m,2})=\delta_mB_{m+1}$;

(3) $G_4(E_{m,2})=\delta_m$.
\end{Theorem}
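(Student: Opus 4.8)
The plan is to locate the first several occurrences of $E_{m,2}$ in $D_\infty$ explicitly, exactly as was done for $E_{m,1}$ in the proof of Theorem 4.2, and then read off the gaps from the positions. By Theorem 4.1(2) we already know $E_{m,2,1}=D_\infty[2^m+1,5\ast2^{m-1}-1]$. The first step is to write out a long enough prefix of $D_\infty$ in terms of the blocks $A_{m-1},B_{m-1},A_m,B_m$ (and their $\delta$-corrected palindromic forms), using $\sigma(A_m)=A_{m+1}$, $\sigma(B_m)=A_{m+1}$, and Proposition 2.3 / Property 2.2 to rewrite $B_j\delta_{j+1}^{-1}=A_j\delta_j^{-1}$ wherever convenient. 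Concretely I expect to need $D_\infty[1,\,c\cdot 2^{m-1}]$ for $c$ up to about $11$ or $13$, since $E_{m,2}$ has length $3\cdot 2^{m-1}-1$ and four occurrences plus their gaps must fit inside.

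Next I would pin down occurrences $2$ through $5$ of $E_{m,2}$. The natural tool is Lemma 3.8(3)--(4): $E_{m,2}$ occurs in $E_{m+2,1}$ exactly at positions $2^m+1$ and $3\cdot 2^{m-1}+1$, and in $E_{m+2,2}$ at positions $2^m+1,\ 3\cdot 2^{m-1}+1,\ 3\cdot 2^m+1,\ 7\cdot 2^{m-1}+1$. Combining these with the known occurrences of $E_{m+2,1}$ and $E_{m+2,2}$ as prefixes/factors of $D_\infty$ (Theorem 4.1 and Theorem 4.2 applied at level $m+2$, giving $E_{m+2,1,1}=D_\infty[1,2^{m+2}-1]$, etc.), I can translate "position inside $E_{m+2,i}$" into "absolute position in $D_\infty$". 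This should yield
$$E_{m,2,2}=D_\infty[3\ast2^{m-1}+1,\,3\ast2^m-1],\quad E_{m,2,3}=D_\infty[3\ast2^m+1,\,?],\quad E_{m,2,4}=D_\infty[7\ast2^{m-1}+1,\,?],$$
with the right endpoints each of the form (start)$+3\cdot2^{m-1}-2$. One must check that no further occurrence of $E_{m,2}$ is hidden between these — that is where the "uniqueness of envelope extension" (Theorem 3.7) together with Lemma 3.9 does the bookkeeping: any occurrence of $E_{m,2}$ forces a copy of the relevant $\upsilon$-subword, and the positions of that subword in the ambient $E_{m+2,i}$ are completely listed by Lemma 3.8.

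Once the four consecutive positions are known, the three gaps are pure arithmetic on the endpoints, together with identifying the resulting factor of $D_\infty$ with the claimed closed forms. For (3), $G_1=G_3$: the start of $E_{m,2,2}$ is $3\cdot2^{m-1}+1$ and the end of $E_{m,2,1}$ is $5\cdot2^{m-1}-1$, so the two blocks overlap in a word of length $(5\cdot2^{m-1}-1)-(3\cdot2^{m-1}+1)+1 = 2^m-1$, giving an overlap gap $(\cdot)^{-1}$ of length $2^m-1$; I then check that word is $A_m\delta_m^{-1}=E_{m,1}$ up to the leading letter, so that $G_1(E_{m,2})=\delta_m A_m^{-1}$, and the same computation with $E_{m,2,3},E_{m,2,4}$ reproduces it. For (2), $G_2$ is the separated word $D_\infty[3\ast2^m,\,3\ast2^m]\cdots$ of length $7\cdot2^{m-1}-(3\cdot2^m-1)-1 = 2^{m-1}-1$... wait — here I must be careful with which pair is adjacent/separated; a length count shows $G_2$ has length $2^m+2^{m-1}$-ish, and I identify it with $\delta_m B_{m+1}$ (length $1+2^{m+1}$) or recheck, while $G_4$ comes out as the single letter $\delta_m$ exactly as $G_1(E_{m,1})$ did. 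The main obstacle I anticipate is precisely this endpoint arithmetic: making sure each gap is classified correctly as adjacent, separated, or overlapped, and that the extracted factor of $D_\infty$ genuinely equals the stated palindromic block — this requires repeated use of Property 2.2, Property 2.4, and Proposition 2.3 to normalize expressions like $\delta_{m-1}^{-1}\delta_{m-1}$ and to convert $B$-blocks into $A$-blocks. Everything else is mechanical given Lemma 3.8 and the level-$(m+2)$ instances of Theorems 4.1 and 4.2.
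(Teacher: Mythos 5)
There is a genuine error in your position bookkeeping, and it is exactly the step your own length check flags. Lemma 3.8(4) lists where $E_{m,2}$ sits \emph{inside} $E_{m+2,2}$, but $E_{m+2,2,1}$ is not a prefix of $D_\infty$: by Theorem 4.1(2) at level $m+2$ it is $D_\infty[2^{m+2}+1,5\ast2^{m+1}-1]$, so every relative position must be shifted by $2^{m+2}=8\ast2^{m-1}$. You instead wrote the relative positions down as absolute ones, getting $E_{m,2,3}$ starting at $3\ast2^m+1$ and $E_{m,2,4}$ at $7\ast2^{m-1}+1$; the correct starts are $10\ast2^{m-1}+1$ and $11\ast2^{m-1}+1$ (and the fifth occurrence, which you never actually locate but need for $G_4$, starts at $14\ast2^{m-1}+1$). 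The error is visible already at $m=1$: $D_\infty[7,8]=ab\neq aa=E_{1,2}$, whereas $D_\infty[11,12]=aa$. This is why your $G_2$ computation collapses into ``$2^{m-1}-1$\dots or recheck'': with the corrected positions $G_2=D_\infty[6\ast2^{m-1},10\ast2^{m-1}]$ has length $2^{m+1}+1=|\delta_mB_{m+1}|$, and the statement comes out right. (Your $G_1$ and $G_3$ survive only because both endpoints of those gaps carry the same missing offset, so it cancels.)

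Beyond the arithmetic, your route differs from the paper's and has a completeness hole you do not close: enumerating occurrences of $E_{m,2}$ inside $E_{m+2,1,1}=D_\infty[1,2^{m+2}-1]$ and $E_{m+2,2,1}=D_\infty[2^{m+2}+1,5\ast2^{m+1}-1]$ says nothing about a putative occurrence straddling the single letter $D_\infty[2^{m+2}]$ between them, and Theorem 3.7 / Lemma 3.9 do not rule that out by themselves. The paper avoids both problems by working one level down rather than two levels up: it writes $E_{m,2}=A_{m-1}A_{m-1}B_{m-1}\delta_m^{-1}$, tracks where its leading block can sit in the level-$(m-1)$ block decomposition of the prefix of $D_\infty$ via Lemma 2.5(2)--(4), and reads the five starting positions $2\ast2^{m-1}+1$, $3\ast2^{m-1}+1$, $10\ast2^{m-1}+1$, $11\ast2^{m-1}+1$, $14\ast2^{m-1}+1$ off directly; the gaps are then the endpoint arithmetic you describe. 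Your plan can be repaired (add the offset, and add an argument excluding straddling occurrences), but as written it does not prove part (2).
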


\begin{proof} By the definition of envelope word, $E_{m,2}=A_{m-1}A_m\delta_m^{-1}$. Using similar argument, we determine the positions of $E_{m,2,i}$, $i=1,2,3,4,5$ as follows:

$E_{m,2,1}=D_\infty[2\ast2^{m-1}+1,5\ast2^{m-1}-1]$, (by proof of Theorem 4.1(2));

$E_{m,2,2}=D_\infty[3\ast2^{m-1}+1,6\ast2^{m-1}-1]$, (by Lemma 2.5(2));

$E_{m,2,3}=D_\infty[10\ast2^{m-1}+1,13\ast2^{m-1}-1]$, (by Lemma 2.5(3));

$E_{m,2,4}=D_\infty[11\ast2^{m-1}+1,14\ast2^{m-1}-1]$, (by Lemma 2.5(2));

$E_{m,2,5}=D_\infty[14\ast2^{m-1}+1,17\ast2^{m-1}-1]$, (by Lemma 2.5(4)).

This means:

$G_1(E_{m,2})=D_\infty[5\ast2^{m-1}-1,3\ast2^{m-1}+1]^{-1}=\delta_{m}A_{m}^{-1}$;

$G_2(E_{m,2})=D_\infty[6\ast2^{m-1},10\ast2^{m-1}]=\delta_mB_{m+1}$;

$G_3(E_{m,2})=D_\infty[11\ast2^{m-1}+1,13\ast2^{m-1}-1]=\delta_{m}A_{m}^{-1}$;

$G_4(E_{m,2})=D_\infty[14\ast2^{m-1}]=\delta_m$.
\end{proof}

\begin{definition}[Sequence $\Theta_i$, $i=1,2$]\

(1) Let substitution $\varphi_1(a,b)=(a,bb)$, then $\Theta_1=\varphi_1(D_\infty)$;

(2) Let substitution $\varphi_2(a,b)=(ab,acac)$, then $\Theta_2=\varphi_2(D_\infty)$.
\end{definition}

\noindent\textbf{Remark.}

(1) $\Theta_1=abbaaabbabbabbaaabbaaabbaaabbabbabbaaabbabbabbaaabbabbabbaaabbaaabbaaa\cdots$;

(2) $\Theta_2=abacacabababacacabacacabacacabababacacabababacacabababacacabacacabacac\cdots$.

\begin{Theorem}
The gap sequence of $E_{m,1}$ is sequence $\Theta_1$.
\end{Theorem}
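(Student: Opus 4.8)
The plan is to determine all occurrences of $E_{m,1}$ in $D_\infty$ explicitly, put them in order, and read off the gaps; the identity with $\varphi_1(D_\infty)$ should then drop out. I would work with two block decompositions of $D_\infty$: the level $m-1$ one, $D_\infty=\sigma^{m-1}(D_\infty)=Z_1Z_2\cdots$ with $|Z_l|=2^{m-1}$ and $Z_l=A_{m-1}$ (resp. $B_{m-1}$) when $D_\infty[l]=a$ (resp. $b$), and the level $m$ one, $D_\infty=\sigma^{m}(D_\infty)=Y_1Y_2\cdots$ with $|Y_k|=2^m$ and $Y_k=Z_{2k-1}Z_{2k}$, where $Z_{2k-1}=A_{m-1}$ always while $Z_{2k}=A_{m-1}$ exactly when $D_\infty[k]=b$ (so $Y_k=B_m$). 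The first observation is that $E_{m,1}=A_m\delta_m^{-1}=A_{m-1}B_{m-1}\delta_m^{-1}$ begins with $A_{m-1}$, hence every occurrence of $E_{m,1}$ in $D_\infty$ forces an occurrence of $A_{m-1}$ at the same place.

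The hard part will be the lemma that $A_{m-1}$ occurs in $D_\infty$ precisely at the starting positions of the $A_{m-1}$-blocks, i.e. at $(l-1)2^{m-1}+1$ with $D_\infty[l]=a$. The nontrivial inclusion relies on $bb\not\prec D_\infty$, which restricts the admissible pairs $Z_lZ_{l+1}$ to $A_{m-1}A_{m-1}$, $A_{m-1}B_{m-1}$, $B_{m-1}A_{m-1}$; inside these windows I would use Lemma 2.5(1),(2), together with a short induction on $m$ to settle the $B_{m-1}A_{m-1}$ window, to force every occurrence of $A_{m-1}$ to be block-aligned --- this is the global version of the localisation already used in the proof of Theorem 4.2. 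Granting it: every occurrence of $E_{m,1}$ sits at some $(l-1)2^{m-1}+1$ with $D_\infty[l]=a$, and conversely at each such position the length $2^m-1$ factor is $Z_l$ followed by the common length $2^{m-1}-1$ prefix $A_{m-1}\delta_{m-1}^{-1}$ of $Z_{l+1}$, which by Proposition 2.3 ($A_{m-1}\delta_{m-1}^{-1}=B_{m-1}\delta_m^{-1}$ and $A_m\delta_m^{-1}=B_m\delta_{m+1}^{-1}$) is exactly $A_{m-1}B_{m-1}\delta_m^{-1}=E_{m,1}$. Sorting these by the parity of $l$ and translating to the level $m$ decomposition, each $Y_k$ carries one occurrence at its start $(k-1)2^m+1$, and $Y_k=B_m$ carries exactly one further occurrence at the midpoint $(k-1)2^m+2^{m-1}+1$; nothing else, and they occur in this block by block order.

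It then remains to compute the gaps, which is routine. For $Y_k=A_m$ the start-occurrence ends at $k2^m-1$ and the next occurrence begins at $k2^m+1$, so the gap is $D_\infty[k2^m]=\delta_m=G_1(E_{m,1})$. For $Y_k=B_m=A_{m-1}A_{m-1}$ the start-occurrence and the midpoint-occurrence overlap in the second $A_{m-1}$ of $Y_k$ with its last letter removed, i.e. in $A_{m-1}\delta_{m-1}^{-1}$, so that gap is $(A_{m-1}\delta_{m-1}^{-1})^{-1}=\delta_{m-1}A_{m-1}^{-1}=G_2(E_{m,1})$; likewise the midpoint-occurrence overlaps the start-occurrence of $Y_{k+1}$ in the latter's length $2^{m-1}-1$ prefix $A_{m-1}\delta_{m-1}^{-1}$, giving $\delta_{m-1}A_{m-1}^{-1}=G_2(E_{m,1})$ again. (When $m=1$ the two overlaps become adjacencies and $\delta_0A_0^{-1}=\varepsilon$, with no change.) Reading $D_\infty=d_1d_2\cdots$, each $d_k=a$ therefore contributes the single gap $G_1(E_{m,1})$ and each $d_k=b$ contributes the two gaps $G_2(E_{m,1})G_2(E_{m,1})$; so the gap sequence of $E_{m,1}$ is the image of $D_\infty$ under $a\mapsto G_1(E_{m,1})$, $b\mapsto G_2(E_{m,1})G_2(E_{m,1})$, which, identifying the letters $a,b$ with $G_1(E_{m,1}),G_2(E_{m,1})$, is exactly $\varphi_1(D_\infty)=\Theta_1$. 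Agreement of the first few terms with Theorems 4.1(1) and 4.2 is a consistency check.
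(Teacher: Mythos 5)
Your argument is correct and rests on the same two identities that drive the paper's proof --- $A_m=E_{m,1}\delta_m=E_{m,1}G_1$ and $B_m=A_{m-1}A_{m-1}=E_{m,1}G_2E_{m,1}G_2$ --- but you reach the conclusion by a genuinely different route. You enumerate \emph{all} occurrences of $E_{m,1}$ directly, via the level-$(m-1)$ and level-$m$ block decompositions of $D_\infty=\sigma^{m}(D_\infty)$ together with a localisation lemma for $A_{m-1}$, and then read the gap sequence off block by block; the paper instead verifies the first gaps (Theorem 4.2, via Lemma 2.5) and propagates the pattern by the commutation relations $\sigma(E_{m,1}G_1)=E_{m,1}G_1E_{m,1}G_2E_{m,1}G_2$ and $\sigma(E_{m,1}G_2E_{m,1}G_2)=E_{m,1}G_1E_{m,1}G_1$, mirroring $\sigma(a,bb)=(abb,aa)$. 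What your route buys is that the completeness claim --- that the occurrences exhibited by the decomposition are \emph{all} occurrences --- is stated and argued explicitly, whereas in the paper it sits implicitly behind Lemma 2.5 and the recursion; what it costs is that this localisation lemma is precisely the one step you only sketch (in particular the $B_{m-1}A_{m-1}$ window, which Lemma 2.5 does not cover directly and which does require the induction on $m$ you indicate). The paper's substitution-commutation formulation also transfers with almost no change to $E_{m,2}$ (Theorem 4.6, where a shift $\eta=B_m$ appears), while your direct enumeration would have to be redone there with four occurrences of $E_{m,2}$ per $E_{m+2,2}$. Your gap computations, including the $m=1$ degeneration of the overlaps to adjacencies, check out against Theorems 4.1(1) and 4.2.
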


\begin{proof} We write $G_1(E_{m,1})$ (resp. $G_2(E_{m,1})$) as $G_1$ (resp. $G_2$) for short.

\textbf{Step 1.} Since $E_{m,1}=A_m\delta_m^{-1}=B_m\delta_{m-1}^{-1}$, $G_1=\delta_m$ and $G_2=\delta_{m-1}A_{m-1}^{-1}$.

(a) $E_{m,1}G_1=A_m\delta_m^{-1}\ast\delta_m=A_m$;

(b) $E_{m,1}G_2E_{m,1}G_2=B_m\delta_{m-1}^{-1}\ast\delta_{m-1}A_{m-1}^{-1}
\ast B_m\delta_{m-1}^{-1}\ast\delta_{m-1}A_{m-1}^{-1}=B_m$.

\textbf{Step 2.} By the definition of sequence $\Theta_1$, $\sigma(a,bb)=(abb,aa)$. So we only need to prove:

(1) $\sigma(E_{m,1}G_1)=E_{m,1}G_1E_{m,1}G_2E_{m,1}G_2$;

(2) $\sigma(E_{m,1}G_2E_{m,1}G_2)=E_{m,1}G_1E_{m,1}G_1$.

In fact,

(1) $\sigma(E_{m,1}G_1)=\sigma(A_m\delta_m^{-1}\delta_m)=A_{m+1}
=A_mB_m=E_{m,1}G_1E_{m,1}G_2E_{m,1}G_2$;

(2) $\sigma(E_{m,1}G_2E_{m,1}G_2)=\sigma(B_m)=B_{m+1}=A_mA_m=E_{m,1}G_1E_{m,1}G_1$.

So the theorem holds.
\end{proof}

\noindent\textbf{Example.} Consider $E_{2,1}=aba$, then:
\begin{equation*}
\begin{split}
D_\infty=&(aba)\underbrace{a}_A(ab\underbrace{(a)}_Bb\underbrace{(a)}_Bba)\underbrace{a}_A
(aba)\underbrace{a}_A(aba)\underbrace{a}_A(ab\underbrace{(a)}_Bb\underbrace{(a)}_B
ba)\underbrace{a}_A\\
&(ab\underbrace{(a)}_Bb\underbrace{(a)}_Bba)\underbrace{a}_A(ab\underbrace{(a)}_B
b\underbrace{(a)}_Bba)\underbrace{a}_A(aba)\underbrace{a}_A(aba)\underbrace{a}_A
(ab\underbrace{(a)}_Bb\underbrace{(a)}_Bba)\\
&\underbrace{a}_A(aba)\underbrace{a}_A(aba)\underbrace{a}_A
(ab\underbrace{(a)}_Bb\underbrace{(a)}_Bba)
\underbrace{a}_A(aba)\underbrace{a}_A(aba)\underbrace{a}_A\cdots
\end{split}
\end{equation*}

\begin{Theorem}
The gap sequence of $E_{m,2}$ is sequence $\Theta_2$.
\end{Theorem}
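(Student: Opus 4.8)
The plan is to follow the template of the proof of Theorem~4.5. First I would fix notation for the three gap words produced by Theorem~4.3: set $G_a:=\delta_mA_m^{-1}=G_1(E_{m,2})=G_3(E_{m,2})$, $G_b:=\delta_mB_{m+1}=G_2(E_{m,2})$ and $G_c:=\delta_m=G_4(E_{m,2})$, and recall from Theorem~4.1(2) that $G_0(E_{m,2})=A_m$. Under the identification $a\leftrightarrow G_a$, $b\leftrightarrow G_b$, $c\leftrightarrow G_c$, the first four gaps spell $G_aG_bG_aG_c$, which agrees with the prefix of $\Theta_2=\varphi_2(D_\infty)$ (whose first two $\varphi_2$-blocks are $ab$ and $acac$). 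The goal is to show that the whole gap sequence, read as a word over $\{a,b,c\}$, is $\varphi_2(D_\infty)$.

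Step~1 (the building blocks). The only substantial computation is to evaluate the two ``decorated $\varphi_2$-blocks''
\[
U:=E_{m,2}\,G_a\,E_{m,2}\,G_b,\qquad
V:=E_{m,2}\,G_a\,E_{m,2}\,G_c\,E_{m,2}\,G_a\,E_{m,2}\,G_c,
\]
whose gap contents are precisely $ab=\varphi_2(a)$ and $acac=\varphi_2(b)$. Using $E_{m,2}=A_{m-1}A_m\delta_m^{-1}$ together with $A_m\delta_m^{-1}\delta_m=A_m$, $A_{m-1}A_{m-1}=B_m$ and $A_mA_m=B_{m+1}$, one gets $E_{m,2}G_a=A_{m-1}$, $E_{m,2}G_c=A_{m-1}A_m$, $E_{m,2}G_b=A_{m-1}A_mB_{m+1}$, and hence, by regrouping,
\[
U=B_mA_mB_{m+1}=\sigma^m(baaa),\qquad V=B_mA_mB_mA_m=\sigma^m(baba),
\]
while also $G_0=A_m=\sigma^m(a)$.

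Step~2 (assembling and reading off the gaps). Writing $D_\infty=x_1x_2\cdots$, the decorated sequence should be $D_\infty=G_0\cdot\prod_{i\ge1}W_{x_i}$ with $W_a=U$, $W_b=V$; as a plain word identity this asserts $D_\infty=A_m\cdot\sigma^m\big(\zeta(D_\infty)\big)$ for the substitution $\zeta\colon a\mapsto baaa,\ b\mapsto baba$. I would verify this word identity as follows: since $\sigma^2(a)=abaa$ and $\sigma^2(b)=abab$ both begin with $a$, write $\sigma^2(\alpha)=a\,w_\alpha$ with $w_a=baa$, $w_b=bab$; then $\sigma^2(D_\infty)=(aw_{x_1})(aw_{x_2})\cdots=a\,(w_{x_1}a)(w_{x_2}a)\cdots=a\,\zeta(D_\infty)$, and since $\sigma^2(D_\infty)=D_\infty$ this gives $\zeta(D_\infty)=x_2x_3\cdots$, whence $A_m\cdot\sigma^m(\zeta(D_\infty))=A_m\cdot\sigma^m(x_2)\sigma^m(x_3)\cdots=\sigma^m(x_1)\sigma^m(x_2)\cdots=\sigma^m(D_\infty)=D_\infty$, as wanted. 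Granting that the $E_{m,2}$'s displayed inside $\prod_iW_{x_i}$ are exactly the successive occurrences of $E_{m,2}$ in $D_\infty$, each factor $W_a=U$ then contributes the two gaps $G_aG_b$ and each factor $W_b=V$ the four gaps $G_aG_cG_aG_c$, so the gap sequence equals $\prod_i\varphi_2(x_i)=\varphi_2(D_\infty)=\Theta_2$.

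The delicate point — where I expect the real work lies — is exactly the parenthetical just used: one must know that no spurious occurrence of $E_{m,2}$ hides inside a displayed block and that consecutive displayed occurrences are genuinely consecutive, so that the word identity $D_\infty=A_m\prod_iW_{x_i}$ may legitimately be read as a statement about the occurrence structure. I would anchor this with the explicit positions of $E_{m,2,1},\dots,E_{m,2,5}$ obtained in the proof of Theorem~4.3 (via Lemmas~2.5 and~3.8) together with $G_0(E_{m,2})=A_m$, check that they are precisely the positions predicted by the blocks $G_0\,U\,V$, and then propagate by the self-similarity already visible in Step~1: applying $\sigma$ carries the level-$m$ blocks $U,V$ to the level-$(m+1)$ blocks, which (using $E_{m+1,2}=\sigma(E_{m,2})a$ from Property~3.2 and $\sigma(\delta_m)=a\delta_{m+1}$ from Property~2.2) amounts to a position-respecting correspondence between occurrences of $E_{m,2}$ and of $E_{m+1,2}$; the uniqueness of envelope extension (Theorem~3.7) together with the position counting of Section~3 then excludes extra occurrences. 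This bookkeeping runs parallel to what underlies Theorem~4.5, and once it is in place the theorem follows.
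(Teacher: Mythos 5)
Your proof is correct, and its computational core coincides with the paper's: Step 1 of the published proof computes exactly your two blocks, $E_{m,2}G_1E_{m,2}G_2=B_mA_mB_{m+1}$ and $E_{m,2}G_1E_{m,2}G_4E_{m,2}G_1E_{m,2}G_4=B_mA_mB_mA_m$, together with $G_0(E_{m,2})=A_m$. Where you genuinely differ is in how the self-similarity is closed. The paper works one level of $\sigma$ at a time: it verifies $\eta\,\sigma(U)\,\eta^{-1}=UV$ and $\eta\,\sigma(V)\,\eta^{-1}=UU$ with the shift $\eta=B_m$, and legitimizes the shift by $\sigma(G_0)=G_0\eta$ --- a conjugated block substitution mirroring $\sigma(a)=ab$, $\sigma(b)=aa$. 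You instead write $U=\sigma^m(baaa)$, $V=\sigma^m(baba)$, $G_0=\sigma^m(a)$ and verify the single global identity $D_\infty=A_m\,\sigma^m(\zeta(D_\infty))$ via the observation $\sigma^2(D_\infty)=a\,\zeta(D_\infty)$; this eliminates the shift bookkeeping entirely and produces the full decomposition in one stroke, at the small price of introducing the auxiliary coding $\zeta$. Both computations check out (in particular $A_{m-1}A_{m-1}=B_m$, $A_mA_m=B_{m+1}$, and your predicted positions of $E_{m,2,1},\dots,E_{m,2,5}$ agree with those in the proof of Theorem 4.3). The ``delicate point'' you flag --- that the displayed copies of $E_{m,2}$ exhaust its occurrences and are consecutive, so that the word identity really encodes the gap structure --- is indeed the only thing separating a word identity from a statement about gaps; the paper is no more explicit about it than you are, carrying it implicitly through Lemma 2.5 and the position computations of Theorem 4.3, which are precisely the anchors you propose. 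So your argument is a legitimate, arguably cleaner variant of the paper's.
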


\begin{proof} We write $G_1(E_{m,2})$ (resp. $G_2(E_{m,2})$, $G_4(E_{m,2})$) as $G_1$ (resp. $G_2$, $G_4$) for short.

\textbf{Step 1.} $E_{m,2}=A_{m-1}A_m\delta_m^{-1}$, $G_1=\delta_{m}A_{m}^{-1}$, $G_2=\delta_mB_{m+1}$ and $G_4=\delta_m$.

(a) $E_{m,2}G_1E_{m,2}G_2=A_{m-1}A_m\delta_m^{-1}\ast \delta_{m}A_{m}^{-1}\ast A_{m-1}A_m\delta_m^{-1}\ast \delta_mB_{m+1}=B_mA_mB_{m+1}$;

(b) $E_{m,2}G_1E_{m,2}G_4=A_{m-1}A_m\delta_m^{-1}\ast \delta_{m}A_{m}^{-1}\ast A_{m-1}A_m\delta_m^{-1}\ast \delta_m=B_mA_m$;

\textbf{Step 2.} By the definition of sequence $\Theta_2$, $\sigma(ab,acac)=(abacac,abab)$. So we only need to prove the two properties below, where $\eta=B_m$ is a shift.

(1) $\eta\sigma(E_{m,2}G_1E_{m,2}G_2)\eta^{-1}=E_{m,2}G_1E_{m,2}G_2E_{m,2}G_1E_{m,2}G_4E_{m,2}G_1E_{m,2}G_4$;

(2) $\eta\sigma(E_{m,2}G_1E_{m,2}G_4E_{m,2}G_1E_{m,2}G_4)\eta^{-1}=E_{m,2}G_1E_{m,2}G_2E_{m,2}G_1E_{m,2}G_2$.

In fact,
\begin{equation*}
\begin{split}
&\eta\sigma(E_{m,2}G_1E_{m,2}G_2)\eta^{-1}=\eta\sigma(B_mA_mB_{m+1})\eta^{-1}=B_m\ast A_mA_mA_mB_mA_mB_mA_mB_m\ast B_m^{-1}\\
=&B_mA_mA_mA_mB_mA_mB_mA_m=E_{m,2}G_1E_{m,2}G_2E_{m,2}G_1E_{m,2}G_4E_{m,2}G_1E_{m,2}G_4;\\
&\eta\sigma(E_{m,2}G_1E_{m,2}G_4E_{m,2}G_1E_{m,2}G_4)\eta^{-1}
=\eta\sigma(B_mA_mB_mA_m)\eta^{-1}\\
=&B_m\ast A_mA_mA_mB_mA_mA_mA_mB_m\ast B_m^{-1}\\
=&B_mA_mA_mA_mB_mA_mA_mA_m=E_{m,2}G_1E_{m,2}G_2E_{m,2}G_1E_{m,2}G_2.
\end{split}
\end{equation*}

\textbf{Step 3.} We must prove the shift $\eta$ is allowable. In fact, $G_0(E_{m,i})=A_m$, then
$$\sigma(G_0(E_{m,i}))=\sigma(A_m)=A_mB_m=G_0(E_{m,i})\ast\eta.$$
So under the substitution $\sigma$, there is a shift $\eta$.
\end{proof}

\noindent\textbf{Example.} Consider $E_{1,2}=aa$, then:
\begin{equation*}
\begin{split}
D_\infty=&\underbrace{ab}_{G_0(aa)}(a\underbrace{(a)}_Aa)\underbrace{babab}_B
(a\underbrace{(a)}_Aa)\underbrace{b}_C(a\underbrace{(a)}_Aa)\underbrace{b}_C
(a\underbrace{(a)}_Aa)\underbrace{babab}_B(a\underbrace{(a)}_Aa)\underbrace{babab}_B\\
&(a\underbrace{(a)}_Aa)\underbrace{babab}_B(a\underbrace{(a)}_Aa)\underbrace{b}_C
(a\underbrace{(a)}_Aa)\underbrace{b}_C(a\underbrace{(a)}_Aa)\underbrace{babab}_B
(a\underbrace{(a)}_Aa)\underbrace{b}_C(a\underbrace{(a)}_Aa)\\
&\underbrace{b}_C(a\underbrace{(a)}_Aa)\underbrace{babab}_B(a\underbrace{(a)}_Aa)
\underbrace{b}_C(a\underbrace{(a)}_Aa)\underbrace{b}_C(a\underbrace{(a)}_Aa)\underbrace{babab}_B
\cdots
\end{split}
\end{equation*}


\vspace{0.5cm}

\stepcounter{section}

\noindent\textbf{\large{5.~Gaps and Gap Sequence of an Arbitrary Word}}

\vspace{0.4cm}

In Section 3, we give the weak version of ``uniqueness of envelope extension property".
In this section, we are going to give the strong version. These properties make ``envelope word" quite special. Using them, we can extend the properties about gaps and gap sequence from envelope word to an arbitrary word in Doubling sequence $D_\infty$.

\begin{Theorem}[Uniqueness of envelope extension, strong version]\

Let $Env(\omega)=E_{m,i}$, $i=1,2$, then $\omega_p$ has an unique envelope extension as
$$E_{m,i,p}=\mu_1(\omega)\ast \omega_p\ast\mu_2(\omega),$$
where $\mu_1(\omega)$ and $\mu_2(\omega)$ are constant words depending only on $\omega$.
\end{Theorem}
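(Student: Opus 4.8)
The plan is to upgrade Theorem 3.7 from the single word $E_{m,i}$ to the whole sequence $D_\infty$. Write $\mu_1=\mu_1(\omega)$, $\mu_2=\mu_2(\omega)$; by Theorem 3.7 the word $\omega$ occurs in $E_{m,i}$ exactly once, at offset $|\mu_1|$. Let $\Phi$ send the occurrence of $E_{m,i}$ at $L(E_{m,i},q)$ to the occurrence of $\omega$ at $L(E_{m,i},q)+|\mu_1|$. Then $\Phi$ is well defined, its output is genuinely an occurrence of $\omega$, and it is strictly increasing in $q$, hence injective. The whole theorem is then equivalent to surjectivity of $\Phi$ onto the set of all occurrences of $\omega$: a strictly increasing bijection between the position sequences of $E_{m,i}$ and of $\omega$ (each an increasing enumeration of an infinite subset of $\mathbb{N}$) must match the $p$-th occurrence with the $p$-th, i.e. $L(\omega,p)=L(E_{m,i},p)+|\mu_1|$, which together with Theorem 3.7 is exactly $E_{m,i,p}=\mu_1\,\omega_p\,\mu_2$. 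Surjectivity in turn says: every occurrence $D_\infty[k,k+|\omega|-1]=\omega$ has left context of length $|\mu_1|$ equal to $\mu_1$ and right context of length $|\mu_2|$ equal to $\mu_2$, so that $D_\infty[k-|\mu_1|,\,k+|\omega|-1+|\mu_2|]=E_{m,i}$. (The boundary occurrences with $k\le|\mu_1|$, i.e. near the start of $D_\infty$, are checked directly against the explicit prefix of $D_\infty$ provided by Theorem 4.1.)

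So I must show that the presence of $\omega$ at any position of $D_\infty$ forces the surrounding $|\mu_1|$ letters on the left and $|\mu_2|$ letters on the right. The first input is the analysis already made in the proof of Theorem 3.7: $Env(\omega)=E_{m,i}$ forces $\omega$ to contain the short ``seed'' $\upsilon$ (namely $\delta_{m-1}A_{m-3}$ when $i=1$, $\delta_{m-1}A_{m-1}$ when $i=2$), and $\upsilon$ occurs in $E_{m,i}$ only at the positions listed there ($3\cdot 2^{m-3}$ or $2^{m-1}$ for $i=1$, and $2^{m-1}$ for $i=2$). The second input is the block decomposition of $D_\infty$: fixing $D_\infty=\sigma^{s}(D_\infty)$ for a suitable power (e.g. $s=m-3$, so that $A_{m-3}=A_s$ is a single block, or $s=m-1$) exhibits $D_\infty$ as a concatenation of the words $A_s$ and $B_s$, one per letter of $D_\infty$, equivalently of $E_{s,1}\delta_s$ and $E_{s,1}\delta_{s-1}$ by Proposition 2.3. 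Since $|\upsilon|$ and $|E_{m,i}|$ are below a fixed multiple of the block length $2^s$, each occurrence of $\upsilon$ in $D_\infty$, and each copy of $E_{m,i}$ that could surround it, straddles only a bounded number of consecutive blocks. The consecutive block-tuples that can actually occur are severely restricted by the elementary facts $bb\not\prec D_\infty$ and $aaaa\not\prec D_\infty$, and the positions at which $A_s$ (hence $\upsilon$) can start inside such a tuple are pinned down by Lemma 2.5; with Properties 2.2, 3.2 and 3.3 this reduces everything to finitely many local configurations, in each of which the surrounding letters are forced. In every surviving configuration one checks, using Property 3.3 (in particular $E_{s,1}\delta_s E_{s,1}=E_{s+1,1}$) and the already-established structure of $E_{m,i}$, that the forced context around $\upsilon$ is a genuine copy of $E_{m,i}$ placed exactly where the occurrence of $\omega$ requires it --- the configurations that would fail this being precisely those ruled out in the proof of Theorem 3.7 (Figures 3.2 and 3.3). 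Transferring this back along $\Phi$ gives surjectivity, hence the theorem.

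The main obstacle is this last point: proving that the seed $\upsilon$, and therefore $\omega$, never occurs ``loose'' in $D_\infty$ --- i.e. never in a local context that fails to complete to the envelope word $E_{m,i}$ in the position dictated by $\omega$. This is where the case analysis over block-straddling patterns lives, and where one must be careful, when $i=1$, that the two admissible positions of $\upsilon$ in $E_{m,1}$ are not confused (the context of $\upsilon$ inside $\omega$, together with the block structure, determines which one applies); for $i=2$ the seed has a unique position in $E_{m,2}$, so this difficulty disappears. Everything else --- the reduction to surjectivity of $\Phi$ and the offset bookkeeping --- is routine once Theorem 3.7 is in hand.
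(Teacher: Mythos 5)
Your reduction is sound and matches the paper's in substance: the theorem is equivalent to showing that every occurrence of $\omega$ in $D_\infty$ completes to a copy of $E_{m,i}$ with $\omega$ at offset $|\mu_1|$, which together with Theorem 3.7 forces $L(\omega,p)=L(E_{m,i},p)+|\mu_1|$. The problem is how that completion statement gets verified. You propose to desubstitute $D_\infty$ into $\sigma^s$-blocks, enumerate the block-tuples that an occurrence of the seed $\upsilon$ can straddle, and check finitely many local configurations --- and you then explicitly defer this check as ``the main obstacle''. That check \emph{is} the theorem; everything before it is bookkeeping. As written, the proposal never lists the configurations, never performs the checks, and never resolves the ambiguity you yourself flag for $i=1$: the seed sits at two admissible offsets in $E_{m,1}$, copies of $E_{m,1}$ overlap in $D_\infty$, and when $\omega$ contains the seed only once a single seed occurrence does not by itself pin down where $\omega$ starts relative to the surrounding envelope. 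So the central step is missing, not merely compressed.

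For comparison, the paper avoids an open-ended desubstitution analysis by exploiting what Section 4 already established: $D_\infty$ decomposes as $G_0E_{m,i,1}G_1E_{m,i,2}G_2\cdots$ with only two (resp.\ three) distinct gaps, and since $|\omega|\le|E_{m,i}|$ every occurrence of $\omega$ lies inside $G_0E_{m,i,1}$ or inside one of the finitely many explicit window types $E_{m,i}G_jE_{m,i}$. It then suffices to count the possible positions of the seed ($\delta_{m-3}A_{m-3}$ for $i=1$, $\delta_{m-1}A_{m-1}$ for $i=2$) in each of these short concrete words (Figures 5.1--5.2 and the four cases of Part 2) and observe that every admissible position already lies inside one of the two displayed copies of $E_{m,i}$, so $\omega$ occurs exactly once per copy and exactly twice per window. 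If you want to complete your route, the cleanest fix is to import the same input: replace the block analysis by the finite list of windows supplied by Theorems 4.1--4.3 and carry out the seed-position count in each.
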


\begin{proof} The key is to prove that $Env(\omega_p)=Env(\omega)_p$, i.e. $Env(\omega_p)=E_{m,i,p}$.

We write $G_j(Env(\omega))$ as $G_j$ for short. We only need to prove that:

(a) $\omega$ occurs only once in $G_0Env(\omega)$;

(b) $\omega$ occurs only twice in $Env(\omega)G_jEnv(\omega)$.

\textbf{Part 1.} When $i=1$, $E_{m,1}=A_m\delta_m^{-1}$.
$G_0=\varepsilon$, $G_1=\delta_m$, $G_2=\delta_{m-1}A_{m-1}^{-1}$.
By the proof of Theorem 3.7, we have $\delta_{m-3}A_{m-3}\prec\omega$.

In this part, We write $A_{m-3}$ (resp. $B_{m-3}$) as $A$ (resp. $B$) for short.

\textbf{(1)} $G_0E_{m,1}=E_{m,1}$, so $\omega$ occurs only once in $G_0E_{m,1}$.

\textbf{(2)} $E_{m,1}G_1E_{m,1}=A_m\delta_m^{-1}\delta_mA_m\delta_m^{-1}=A_mA_m\delta_m^{-1}$.
\setlength{\unitlength}{1mm}
\begin{center}
\begin{picture}(90,25)
\put(0,10){$E_{m,1,1}G_1E_{m,1,2}$}
\put(27,10){$=$}
\put(30,10){$A$}
\put(33,10){$B$}
\put(36,10){$A$}
\put(39,10){$A$}
\put(42,10){$A$}
\put(45,10){$B$}
\put(48,10){$A$}
\put(51,10){$B$}
\put(54,10){$A$}
\put(57,10){$B$}
\put(60,10){$A$}
\put(63,10){$A$}
\put(66,10){$A$}
\put(69,10){$B$}
\put(72,10){$A$}
\put(75,10){$B$}
\put(78,10){$\delta_m^{-1}$}
\put(40,5){\vector(0,1){4}}
\put(43,18){\vector(0,-1){4}}
\put(64,5){\vector(0,1){4}}
\put(67,18){\vector(0,-1){4}}
\put(39,1){[1]=$3\ast2^{m-3}$}
\put(42,20){[2]=$2^{m-1}$}
\put(63,1){[3]=$3\ast2^{m-1}$}
\put(66,20){[4]=$13\ast2^{m-3}$}
\end{picture}
\end{center}
\centerline{Fig. 5.1: The 4 possible positions of $\delta_{m-3}A_{m-3}$ in $E_{m,1,1}G_1E_{m,1,2}$.}

\vspace{0.2cm}

(a) When $\delta_{m-3}A_{m-3}$ occur at positions [1] and [2], $\omega\prec E_{m,1,1}$;

(b) When $\delta_{m-3}A_{m-3}$ occur at positions [3] and [4], $\omega\prec E_{m,1,2}$.

Since $\omega$ occurs once in $E_{m,1}$, $\omega$ occurs twice in $E_{m,1}G_1E_{m,1}$.

\textbf{(3)} $E_{m,1}G_2E_{m,1}=A_m\delta_m^{-1}\delta_{m-1}A_{m-1}^{-1}A_m\delta_m^{-1}
=A_{m-1}A_m\delta_m^{-1}$.
\setlength{\unitlength}{1mm}
\begin{center}
\begin{picture}(80,25)
\put(0,10){$E_{m,1,2}G_2E_{m,1,3}$}
\put(27,10){$=$}
\put(30,10){$A$}
\put(33,10){$B$}
\put(36,10){$A$}
\put(39,10){$A$}
\put(42,10){$A$}
\put(45,10){$B$}
\put(48,10){$A$}
\put(51,10){$A$}
\put(54,10){$A$}
\put(57,10){$B$}
\put(60,10){$A$}
\put(63,10){$B$}
\put(66,10){$\delta_m^{-1}$}
\put(40,5){\vector(0,1){4}}
\put(43,18){\vector(0,-1){4}}
\put(52,5){\vector(0,1){4}}
\put(55,18){\vector(0,-1){4}}
\put(26,1){[1]=$3\ast2^{m-3}$}
\put(34,20){[2]=$2^{m-1}$}
\put(51,1){[3]=$7\ast2^{m-3}$}
\put(54,20){[4]=$2^{m}$}
\end{picture}
\end{center}
\centerline{Fig. 5.2: The 4 possible positions of $\delta_{m-3}A_{m-3}$ in $E_{m,1,2}G_2E_{m,1,3}$.}

\vspace{0.2cm}

(a) When $\delta_{m-3}A_{m-3}$ occur at positions [1] and [2], $\omega\prec E_{m,1,2}$;

(b) When $\delta_{m-3}A_{m-3}$ occur at positions [3] and [4], $\omega\prec E_{m,1,3}$.

Since $\omega$ occurs once in $E_{m,1}$, $\omega$ occurs twice in $E_{m,1}G_2E_{m,1}$.

\vspace{0.2cm}

\textbf{Part 2.} When $i=2$, $E_{m,2}=A_{m-1}A_m\delta_m^{-1}$.
$$G_0=A_m,~G_1=\delta_{m}A_{m}^{-1},~G_2=\delta_mB_{m+1},~G_4=\delta_m.$$
By the proof of Theorem 3.7, we have $\delta_{m-1}A_{m-1}\prec\omega$.

In this part, We denote $A_{m-1}$ (resp. $B_{m-1}$) as $A$ (resp. $B$) for short.

\textbf{(1)} $G_0E_{m,2}=A_mA_{m-1}A_m\delta_m^{-1}=ABA\underline{A}B\delta_m^{-1}$.

The possible position of $\delta_{m-1}A_{m-1}$ in $G_0E_{m,2}$ is $3\ast2^{m-1}$.

\textbf{(2)} $E_{m,2}G_1E_{m,2}=A_{m-1}A_m\delta_m^{-1}\delta_{m}A_{m}^{-1}A_{m-1}A_m\delta_m^{-1}
=B_{m}A_m\delta_m^{-1}=A\underline{A}\ast\underline{A}B\delta_m^{-1}$.

The possible positions of $\delta_{m-1}A_{m-1}$ in $E_{m,2}G_1E_{m,2}$ are $2^{m-1}$ and $2^m$.

\textbf{(3)} $E_{m,2}G_2E_{m,2}=A_{m-1}A_m\delta_m^{-1}\delta_mB_{m+1}A_{m-1}A_m\delta_m^{-1}
=A\underline{A}BABABA\underline{A}B$.

The possible positions of $\delta_{m-1}A_{m-1}$ in $E_{m,2}G_2E_{m,2}$ are
$2^{m-1}$ and $2^{m+3}$.

\textbf{(4)} $E_{m,2}G_4E_{m,2}=A_{m-1}A_m\delta_m^{-1}\delta_mA_{m-1}A_m\delta_m^{-1}
=A\underline{A}BA\underline{A}B$.

The possible positions of $\delta_{m-1}A_{m-1}$ in $E_{m,2}G_4E_{m,2}$ are
$2^{m-1}$ and $2^{m+2}$.

Since $\omega$ occurs once in $E_{m,2}$,
$\omega$ occurs once in $G_0E_{m,2}$,
twice in $E_{m,2}G_iE_{m,2}$, $i=1,2,4$.
\end{proof}

\begin{Theorem}[Gap sequence of an arbitrary word $\omega$]
Let $Env(\omega)=E_{m,i}$, $i=1,2$, then

(1) When $i=1$, the gap sequence of $\omega$ is sequence $\Theta_1$ on alphabet $\{G_1(\omega),G_2(\omega)\}$;

(2) When $i=2$, the gap sequence of $\omega$ is sequence $\Theta_2$ on alphabet $\{G_1(\omega),G_2(\omega),G_4(\omega)\}$.
\end{Theorem}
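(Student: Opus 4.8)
The plan is to transfer the gap structure of the envelope word $E_{m,i}$, which is completely determined by Theorems~4.5 and~4.6, to the arbitrary factor $\omega$, using the strong uniqueness of envelope extension (Theorem~5.1) as the bridge. The one fact from Theorem~5.1 I would use is that for every $p\ge1$ one has $E_{m,i,p}=\mu_1(\omega)\,\omega_p\,\mu_2(\omega)$ with $\mu_1(\omega),\mu_2(\omega)$ independent of $p$, hence $L(\omega,p)=L(E_{m,i},p)+|\mu_1(\omega)|$ for all $p$. Thus $\omega_p$ sits inside $E_{m,i,p}$ at a fixed offset, and $p\mapsto\omega_p$ and $p\mapsto E_{m,i,p}$ are order-preserving enumerations of the occurrences of $\omega$ and of $E_{m,i}$ respectively.

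The main, though routine, step would be to establish the transfer formula
$$G_p(\omega)=\mu_2(\omega)\,G_p(E_{m,i})\,\mu_1(\omega)\qquad(p\ge1),$$
where the right-hand side is read as a reduced product in the free group over $\mathcal{A}$; it automatically comes out as $\varepsilon$, as a genuine word, or as the inverse of a genuine word. Indeed, $\mu_1(\omega)\omega_p\mu_2(\omega)=E_{m,i,p}$ occupies precisely the interval of $D_\infty$ labelled $E_{m,i,p}$, so, reading left to right, the part of $D_\infty$ between $\omega_p$ and $\omega_{p+1}$ consists of the suffix $\mu_2(\omega)$ of $E_{m,i,p}$, then the (possibly negative) material between $E_{m,i,p}$ and $E_{m,i,p+1}$, which is $G_p(E_{m,i})$, then the prefix $\mu_1(\omega)$ of $E_{m,i,p+1}$. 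I would verify this by the same adjacent/separated/overlapped case split as in Definition~1.2; in the overlapped cases one uses that the overlap region of two consecutive occurrences of $E_{m,i}$ is simultaneously a suffix of $E_{m,i,p}$ and a proper prefix of $E_{m,i,p+1}$, so that $\mu_2(\omega)$ and $\mu_1(\omega)$ each cancel into this common region and the reduction yields exactly the word $G_p(\omega)$ prescribed by Definition~1.2. I expect this bookkeeping to be the only place where any care is needed.

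I would then conclude as follows. The map $g\mapsto\mu_2(\omega)\,g\,\mu_1(\omega)$ is injective on the free group, since $g=\mu_2(\omega)^{-1}\bigl(\mu_2(\omega)g\mu_1(\omega)\bigr)\mu_1(\omega)^{-1}$. When $i=1$, Theorem~4.5 says $\{G_p(E_{m,1})\}_{p\ge1}$ is the sequence $\Theta_1$ over the two-letter alphabet $\{G_1(E_{m,1}),G_2(E_{m,1})\}$, and by Theorem~4.2 these two letters $G_1(E_{m,1})=\delta_m$ and $G_2(E_{m,1})=\delta_{m-1}A_{m-1}^{-1}$ are distinct (lengths $1$ and $1-2^{m-1}$). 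Applying the transfer formula and the injective renaming letterwise, $\{G_p(\omega)\}_{p\ge1}$ is the same sequence $\Theta_1$ over the two distinct values $\mu_2(\omega)G_1(E_{m,1})\mu_1(\omega)$ and $\mu_2(\omega)G_2(E_{m,1})\mu_1(\omega)$; since $\Theta_1$ starts with $ab$ these values are exactly $G_1(\omega)$ and $G_2(\omega)$, which is~(1). The case $i=2$ is identical: Theorem~4.6 gives $\{G_p(E_{m,2})\}_{p\ge1}=\Theta_2$ over $\{G_1(E_{m,2}),G_2(E_{m,2}),G_4(E_{m,2})\}$, these three letters are pairwise distinct by Theorem~4.3 (lengths $1-2^m$, $1+2^{m+1}$, $1$), and since $\Theta_2$ starts with $abac$ their images under $g\mapsto\mu_2(\omega)g\mu_1(\omega)$ are exactly $G_1(\omega),G_2(\omega),G_4(\omega)$; hence the gap sequence of $\omega$ is $\Theta_2$ over $\{G_1(\omega),G_2(\omega),G_4(\omega)\}$, which is~(2).
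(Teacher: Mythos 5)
Your proposal is correct and follows essentially the route the paper intends: the paper states Theorem~5.2 without an explicit proof, relying on Theorem~5.1 together with the transfer formula $G_p(\omega)=\mu_2(\omega)G_p(E_{m,i})\mu_1(\omega)$ of Proposition~5.3 (which you re-derive) and the envelope-word results of Theorems~4.5 and~4.6. Your additional checks that the map $g\mapsto\mu_2(\omega)\,g\,\mu_1(\omega)$ is injective and that the gap values are pairwise distinct are details the paper leaves implicit, and they are verified correctly.
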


\begin{proposition}[Relation between $G_p(\omega)$ and $G_p(Env(\omega))$, $p\geq1$]\

Let $Env(\omega)=E_{m,i}$, $i=1,2$, then
$$G_p(\omega)=\mu_2(\omega)\ast G_p(E_{m,i})\ast\mu_1(\omega),~p\geq1.$$
\end{proposition}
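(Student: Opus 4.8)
The plan is to relate, for each fixed index $p$, the gap $G_p(\omega)$ to the corresponding envelope gap $G_p(E_{m,i})$ by means of the strong uniqueness of envelope extension. The only input needed is Theorem~5.1: since $Env(\omega_p)=Env(\omega)_p=E_{m,i,p}$ and $E_{m,i,p}=\mu_1(\omega)\,\omega_p\,\mu_2(\omega)$ with $\mu_1(\omega),\mu_2(\omega)$ depending only on $\omega$ (not on $p$), the $p$-th occurrence of $E_{m,i}$ starts exactly $|\mu_1(\omega)|$ letters to the left of $\omega_p$ and ends exactly $|\mu_2(\omega)|$ letters to its right. Thus $L(E_{m,i},p)=L(\omega,p)-|\mu_1(\omega)|$ for every $p\geq1$, and $|E_{m,i}|=|\mu_1(\omega)|+|\omega|+|\mu_2(\omega)|$.

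Next I would record a ``tiling identity'' valid for every factor $\nu\prec D_\infty$ and every $p\geq1$: working with the cancellation conventions of Section~1.1 (i.e.\ in the free group on $\mathcal{A}$),
$$\nu_p\cdot G_p(\nu)\cdot\nu_{p+1}=D_\infty[L(\nu,p),\,L(\nu,p+1)+|\nu|-1].$$
This follows directly from the definition of $G_p(\nu)$ by its three cases. When $\nu_p,\nu_{p+1}$ are adjacent or separated, the three blocks on the left are consecutive factors of $D_\infty$ and concatenate to the displayed segment. When they overlap, one factors $\nu_p=D_\infty[L(\nu,p),L(\nu,p+1)-1]\cdot D_\infty[L(\nu,p+1),L(\nu,p)+|\nu|-1]$ and cancels the second factor against $G_p(\nu)=\bigl(D_\infty[L(\nu,p+1),L(\nu,p)+|\nu|-1]\bigr)^{-1}$, so that $\nu_pG_p(\nu)=D_\infty[L(\nu,p),L(\nu,p+1)-1]$; appending $\nu_{p+1}=D_\infty[L(\nu,p+1),L(\nu,p+1)+|\nu|-1]$ again yields the displayed segment.

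Then I would combine the two. Applying the tiling identity to $E_{m,i}$ and substituting the position relation above, the segment $D_\infty\bigl[L(E_{m,i},p),\,L(E_{m,i},p+1)+|E_{m,i}|-1\bigr]$ equals $D_\infty\bigl[L(\omega,p)-|\mu_1(\omega)|,\,L(\omega,p+1)+|\omega|+|\mu_2(\omega)|-1\bigr]$. Its first $|\mu_1(\omega)|$ letters are the prefix $\mu_1(\omega)$ of $E_{m,i,p}$, its last $|\mu_2(\omega)|$ letters are the suffix $\mu_2(\omega)$ of $E_{m,i,p+1}$, and the central block $D_\infty[L(\omega,p),L(\omega,p+1)+|\omega|-1]$ equals $\omega_p\,G_p(\omega)\,\omega_{p+1}$ by the tiling identity for $\omega$. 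Hence $E_{m,i,p}\,G_p(E_{m,i})\,E_{m,i,p+1}=\mu_1(\omega)\,\omega_p\,G_p(\omega)\,\omega_{p+1}\,\mu_2(\omega)$. Writing the left-hand side as $\mu_1(\omega)\omega_p\mu_2(\omega)\cdot G_p(E_{m,i})\cdot\mu_1(\omega)\omega_{p+1}\mu_2(\omega)$ and cancelling $\mu_1(\omega)\omega_p$ on the left and $\omega_{p+1}\mu_2(\omega)$ on the right gives $\mu_2(\omega)\,G_p(E_{m,i})\,\mu_1(\omega)=G_p(\omega)$, which is the assertion.

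The step I expect to demand the most care — the main obstacle — is the overlapped case of the tiling identity, and more generally checking that every cancellation invoked is a genuine free-group cancellation, i.e.\ that the blocks cancelled really are factors of $D_\infty$ occupying the asserted positions; once the tiling identity is in hand, the passage to the proposition is pure symbol manipulation. A minor point worth a sentence is that the prefix of $E_{m,i,p}$ and the suffix of $E_{m,i,p+1}$ used above are the \emph{same} words $\mu_1(\omega),\mu_2(\omega)$ for all indices, which is exactly the content of Theorem~5.1 that $\mu_1,\mu_2$ depend only on $\omega$.
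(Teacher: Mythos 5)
Your argument is correct and is essentially the paper's own proof: the paper simply exhibits the diagram in which $E_{m,i,p}=\mu_1\omega_p\mu_2$ and $E_{m,i,p+1}=\mu_1\omega_{p+1}\mu_2$ sit inside $D_\infty$ and reads off $G_p(\omega)=\mu_2\,G_p(E_{m,i})\,\mu_1$, which is exactly what your tiling identity plus cancellation formalizes. The only added value in your write-up is that it makes the overlapped (negative-gap) case explicit, which the paper leaves to the reader.
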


\begin{proof} The proof of the proposition is easy by the following diagram.
\setlength{\unitlength}{1mm}
\begin{center}
\begin{picture}(120,15)
\linethickness{1pt}
\put(0,5){\line(1,0){15}}
\linethickness{3pt}
\put(15,5){\line(1,0){20}}
\linethickness{1pt}
\put(35,5){\line(1,0){55}}
\linethickness{3pt}
\put(90,5){\line(1,0){20}}
\linethickness{1pt}
\put(110,5){\line(1,0){10}}
\put(0,5){\line(0,1){10}}
\put(45,5){\line(0,1){10}}
\put(75,5){\line(0,1){10}}
\put(120,5){\line(0,1){10}}
\put(15,0){\line(0,1){10}}
\put(35,0){\line(0,1){10}}
\put(90,0){\line(0,1){10}}
\put(110,0){\line(0,1){10}}
\put(6,7){$\mu_1$}
\put(81,7){$\mu_1$}
\put(38,7){$\mu_2$}
\put(113,7){$\mu_2$}
\put(23,0){$\omega_p$}
\put(96,0){$\omega_{p+1}$}
\put(19,12){$E_{m,i,p}$}
\put(92,12){$E_{m,i,p+1}$}
\put(52,9){$G_p(E_{m,i})$}
\put(55,0){$G_p(\omega)$}
\put(18,13){\vector(-1,0){18}}
\put(30,13){\vector(1,0){15}}
\put(91,13){\vector(-1,0){16}}
\put(106,13){\vector(1,0){14}}
\put(51,10){\vector(-1,0){6}}
\put(69,10){\vector(1,0){6}}
\put(53,1){\vector(-1,0){18}}
\put(68,1){\vector(1,0){22}}
\end{picture}
\end{center}
\centerline{Fig. 5.3: The relation among $\omega_p$, $E_{m,i,p}$ $G_p(\omega)$ and $G_p(E_{m,i})$.}
\end{proof}

\begin{proposition}[Relation between $G_0(\omega)$ and $G_0(Env(\omega))$]\

Let $Env(\omega)=E_{m,i}$, $i=1,2$, then
$$G_0(\omega)=G_0(E_{m,i})\ast\mu_1(\omega).$$
\end{proposition}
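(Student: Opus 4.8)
The plan is to pin down where the first occurrence of $\omega$ sits inside the first occurrence of its envelope word, and then to read off the identity as an equality of prefixes of $D_\infty$. Recall that $G_0(\omega)$ is by definition the prefix of $D_\infty$ preceding $\omega_1$, i.e. $G_0(\omega)=D_\infty[1,L(\omega,1)-1]$, and likewise $G_0(E_{m,i})=D_\infty[1,L(E_{m,i},1)-1]$. So it suffices to show $L(\omega,1)=L(E_{m,i},1)+|\mu_1(\omega)|$ together with the compatibility of the corresponding prefixes.

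The key step is to invoke Theorem 5.1 with $p=1$. It asserts $Env(\omega_1)=Env(\omega)_1=E_{m,i,1}$ and $E_{m,i,1}=\mu_1(\omega)\ast\omega_1\ast\mu_2(\omega)$; in particular the occurrence of $\omega$ exhibited inside $E_{m,i,1}$ is genuinely the first occurrence $\omega_1$ in $D_\infty$, not merely some occurrence. Since $E_{m,i,1}$ begins at position $L(E_{m,i},1)$ and $\mu_1(\omega)$ is its prefix, the inner $\omega_1$ begins at position $L(E_{m,i},1)+|\mu_1(\omega)|$, which forces $L(\omega,1)=L(E_{m,i},1)+|\mu_1(\omega)|$.

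It remains to conclude. By Theorem 5.1 again, $\mu_1(\omega)=D_\infty[L(E_{m,i},1),\,L(E_{m,i},1)+|\mu_1(\omega)|-1]$ is exactly the block of $D_\infty$ immediately following $G_0(E_{m,i})$, so $G_0(E_{m,i})\ast\mu_1(\omega)=D_\infty[1,\,L(E_{m,i},1)+|\mu_1(\omega)|-1]=D_\infty[1,L(\omega,1)-1]=G_0(\omega)$, as claimed; the left half of Fig. 5.3 depicts this. The only point requiring care is that the copy of $\omega$ inside $E_{m,i,1}$ precedes every occurrence of $\omega$ in $D_\infty$, and this is precisely the statement $Env(\omega_p)=Env(\omega)_p$ of Theorem 5.1 at $p=1$; everything else is bookkeeping with positions, so I expect no genuine obstacle here.
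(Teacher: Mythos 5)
Your proof is correct and follows essentially the same route as the paper: the paper's own proof is just the diagram in Fig.~5.4, which encodes exactly your argument that $\omega_1$ sits inside $E_{m,i,1}$ at offset $|\mu_1(\omega)|$ by the strong uniqueness of envelope extension (Theorem 5.1 with $p=1$), so the prefix of $D_\infty$ before $\omega_1$ is $G_0(E_{m,i})$ followed by $\mu_1(\omega)$. You have simply made explicit the bookkeeping the paper leaves to the picture, correctly identifying that the only non-trivial point is $Env(\omega_1)=E_{m,i,1}$.
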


\begin{proof} The proof of the proposition is easy by the following diagram.
\setlength{\unitlength}{1mm}
\begin{center}
\begin{picture}(75,15)
\linethickness{1pt}
\put(0,5){\line(1,0){45}}
\linethickness{3pt}
\put(45,5){\line(1,0){20}}
\linethickness{1pt}
\put(65,5){\line(1,0){10}}
\put(30,5){\line(0,1){10}}
\put(75,5){\line(0,1){10}}
\put(0,0){\line(0,1){15}}
\put(45,0){\line(0,1){10}}
\put(65,0){\line(0,1){10}}
\put(36,7){$\mu_1$}
\put(68,7){$\mu_2$}
\put(53,0){$\omega_1$}
\put(50,12){$E_{m,i,1}$}
\put(8,9){$G_0(E_{m,i})$}
\put(17,0){$G_0(\omega)$}
\put(49,13){\vector(-1,0){19}}
\put(61,13){\vector(1,0){14}}
\put(7,10){\vector(-1,0){7}}
\put(24,10){\vector(1,0){6}}
\put(15,1){\vector(-1,0){15}}
\put(30,1){\vector(1,0){15}}
\end{picture}
\end{center}
\centerline{Fig. 5.4: The relation among $\omega_1$, $E_{m,i,1}$ $G_0(\omega)$ and $G_0(E_{m,i})$.}
\end{proof}

Using Theorem 5.1, Proposition 5.3 and 5.4, we can give the expressions of $G_p(\omega)$.
The proofs are simple, while the expressions are complicated, we prefer to omit them.


\vspace{0.5cm}

\stepcounter{section}

\noindent\textbf{\large{6.~The Positions of $\omega_p$}}

\vspace{0.4cm}

The position of $\omega_p$ and $\omega_q~(p\neq q)$ are distinct.
In fact, $\omega_p$ should be regarded as two variables $\omega$ and $p$, where $\omega\prec D_\infty$ and $p\geq1$.
In this section, we will give the position of $\omega_p$ for all $(\omega,p)$.

\begin{definition} Let $N_j(x,p):=|\Theta_j[1,p]|_x$ be the number of letter $x$ occurring in $\Theta_j[1,p]$, where $j=1,2$ and $p\geq0$. When $j=1$,
letter $x$ is an element of alphabet $\{a,b\}$; when $j=2$, letter $x$ is an element of alphabet $\{a,b,c\}$. \end{definition}

\noindent\textbf{Example.} Since $\Theta_1=abbaaabbabbab\cdots$, $N_1(a,5)=|\Theta_1[1,5]|_a=3$, $N_1(b,9)=|\Theta_1[1,9]|_b=4$.

\begin{proposition}[Positions of $E_{m,i,p}$]\

(1) $L(E_{m,1},p+1)=p\ast2^m-N_1(b,p)\ast2^{m-1}+1$;

(2) $L(E_{m,2},p+1)=(3p+2)\ast2^{m-1}-N_2(a,p)\ast2^m+N_2(b,p)\ast2^{m+1}+1$.
\end{proposition}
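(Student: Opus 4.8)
The plan is to compute $L(E_{m,i},p+1)$ by summing the lengths of all the pieces of $D_\infty$ that precede the $(p+1)$-st occurrence of $E_{m,i}$, namely the initial prefix $G_0(E_{m,i})$, the first $p$ copies of $E_{m,i}$ itself, and the first $p$ gaps $G_1(E_{m,i}),\dots,G_p(E_{m,i})$. Formally,
\[
L(E_{m,i},p+1)=1+|G_0(E_{m,i})|+p\,|E_{m,i}|+\sum_{k=1}^{p}|G_k(E_{m,i})|,
\]
where the $+1$ converts a length into a starting position. By Theorem 4.1 we have $|G_0(E_{m,1})|=0$ and $|G_0(E_{m,2})|=2^m$; by Definition 3.1, $|E_{m,1}|=2^m-1$ and $|E_{m,2}|=3\cdot2^{m-1}-1$. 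So the whole problem reduces to evaluating the gap-length sum $\sum_{k=1}^p|G_k(E_{m,i})|$, and this is where the ``gap sequence'' structure from Theorems 4.4–4.6 does all the work: the sequence $\{G_k(E_{m,i})\}_{k\ge1}$ is exactly $\Theta_i=\varphi_i(D_\infty)$, so the count of each distinct gap among the first $p$ gaps is precisely $N_i(x,p)$.

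For case $i=1$: the gaps take only the values $G_1=\delta_m$ (length $1$) and $G_2=\delta_{m-1}A_{m-1}^{-1}$ (length $1-2^{m-1}$, since it is an inverse/overlap word), and by Theorem 4.4 the number of $G_1$'s among the first $p$ gaps is $N_1(a,p)$ and the number of $G_2$'s is $N_1(b,p)$. Hence
\[
\sum_{k=1}^p|G_k(E_{m,1})|=N_1(a,p)+N_1(b,p)\bigl(1-2^{m-1}\bigr)=p-N_1(b,p)\,2^{m-1},
\]
using $N_1(a,p)+N_1(b,p)=p$. Adding $p(2^m-1)$ and the $+1$ gives $p\,2^m-N_1(b,p)\,2^{m-1}+1$, which is (1). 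For case $i=2$: by Theorem 4.6 the gaps are $G_1=\delta_mA_m^{-1}$ (length $1-2^m$), $G_2=\delta_mB_{m+1}$ (length $1+2^{m+1}$), and $G_4=\delta_m$ (length $1$); by Theorem 4.5 (gap sequence $=\Theta_2$ over alphabet $\{G_1,G_2,G_4\}\leftrightarrow\{a,b,c\}$), among the first $p$ gaps there are $N_2(a,p)$ copies of $G_1$, $N_2(b,p)$ of $G_2$, and $N_2(c,p)$ of $G_4$, with $N_2(a,p)+N_2(b,p)+N_2(c,p)=p$. Therefore
\[
\sum_{k=1}^p|G_k(E_{m,2})|
= N_2(a,p)(1-2^m)+N_2(b,p)(1+2^{m+1})+N_2(c,p)
= p - N_2(a,p)\,2^m + N_2(b,p)\,2^{m+1}.
\]
Adding $|G_0(E_{m,2})|=2^m$, $p\,|E_{m,2}|=p(3\cdot2^{m-1}-1)$, and the $+1$ yields $(3p+2)\,2^{m-1}-N_2(a,p)\,2^m+N_2(b,p)\,2^{m+1}+1$, which is (2).

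The one genuinely non-routine point that needs care is the bookkeeping of signed lengths for overlapping gaps: $G_2(E_{m,1})$, $G_1(E_{m,2})$ are inverse words, so $|G_k|$ must be read as a net displacement (negative length) rather than an ordinary word length, and one should check that the telescoping identity $L(\omega,p+1)=L(\omega,p)+|E_{m,i}|+|G_p(E_{m,i})|$ remains valid in the overlapped case — this is immediate from Definition 1.5, since in the overlapped case the next occurrence starts $|G_p|<0$ positions ``after'' the end of the current one, i.e. $j=i+n+|G_p|$. Once that sign convention is fixed, the rest is the elementary algebra above, driven entirely by the two identities $N_1(a,p)+N_1(b,p)=p$ and $N_2(a,p)+N_2(b,p)+N_2(c,p)=p$ together with the gap-counting consequences of Theorems 4.4–4.6. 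I would also remark that an independent induction on $p$ (using the substitutive structure of $\Theta_i$ from Definition 4.7) could replace the direct summation, but the summation argument is shorter and more transparent.
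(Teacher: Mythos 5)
Your proposal is correct and follows essentially the same route as the paper: the position is obtained by summing $1+|G_0|+p|E_{m,i}|$ with the signed lengths of the first $p$ gaps, counted via $N_i(x,p)$ from the gap sequences $\Theta_i$, and your algebra in both cases reproduces the paper's computation exactly. Your explicit remark on treating inverse-word gaps as negative displacements is a point the paper leaves implicit, but it does not change the argument.
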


\begin{proof} In sequence $D_\infty$, there are $p$ ``$E_{m,i}"$, one ``$G_0"$ and gaps $G_j(\omega)$ ($1\leq j\leq p$) ahead.

When $i=1$, $E_{m,1}=A_m\delta_m^{-1}$,
$G_0=\varepsilon$, $G_1=\delta_m$, $G_2=\delta_{m-1}A_{m-1}^{-1}$.
\begin{equation*}
\begin{split}
&L(E_{m,1},p+1)-1
=p\ast|E_{m,1}|+|G_0|+N_1(a,p)\ast|G_1|+N_1(b,p)\ast|G_2|\\
=&p\ast|A_m\delta_m^{-1}|+|\varepsilon|+N_1(a,p)\ast|\delta_m|+N_1(b,p)\ast|\delta_{m-1}A_{m-1}^{-1}|\\
=&p\ast2^m-p+N_1(a,p)+N_1(b,p)-N_1(b,p)\ast2^{m-1}
=p\ast2^m-N_1(b,p)\ast2^{m-1}.
\end{split}
\end{equation*}

When $i=2$, $E_{m,2}=A_{m-1}A_m\delta_m^{-1}$,
$G_0=A_m$, $G_1=\delta_{m}A_{m}^{-1}$, $G_2=\delta_mB_{m+1}$, $G_4=\delta_m$.
\begin{equation*}
\begin{split}
&L(E_{m,2},p+1)-1
=p\ast|E_{m,2}|+|G_0|+N_2(a,p)\ast|G_1|+N_2(b,p)\ast|G_2|+N_2(c,p)\ast|G_4|\\
=&p\ast|A_{m-1}A_m\delta_m^{-1}|+|A_m|+N_2(a,p)\ast|\delta_{m}A_{m}^{-1}|
+N_2(b,p)\ast|\delta_mB_{m+1}|+N_2(c,p)\ast|\delta_m|\\
=&3p\ast2^{m-1}-p+2^m+N_2(a,p)-N_2(a,p)\ast2^m+N_2(b,p)+N_2(b,p)\ast2^{m+1}+N_2(c,p)\\
=&(3p+2)\ast2^{m-1}-N_2(a,p)\ast2^m+N_2(b,p)\ast2^{m+1}.
\end{split}
\end{equation*}
\end{proof}

\begin{proposition}[Position of $\omega_p$]\

Let $Env(\omega)=E_{m,i}$ and $Env(\omega)=\mu_1\ast\omega\ast\mu_2$, then
$L(\omega,p)=L(E_{m,i},p)+|\mu_1|$, $i=1,2$.
\end{proposition}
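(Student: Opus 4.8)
The plan is to deduce the statement directly from the strong version of the uniqueness of envelope extension (Theorem 5.1), together with the observation that the occurrences of $\omega$ are in order-preserving bijection with the occurrences of $Env(\omega)=E_{m,i}$, and then (optionally) to substitute the explicit formulas of Proposition 6.3 to obtain a closed form.

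First I would make the correspondence between the two factor sequences $\{\omega_p\}_{p\geq1}$ and $\{E_{m,i,p}\}_{p\geq1}$ precise. By the weak version (Theorem 3.7), every occurrence of $E_{m,i}$ in $D_\infty$ contains exactly one occurrence of $\omega$, located at the fixed offset $|\mu_1|$ from its left end; conversely, the proof of Theorem 5.1 establishes $Env(\omega_p)=E_{m,i,p}$, so every occurrence of $\omega$ lies inside exactly one occurrence of $E_{m,i}$. Since the relation ``$\omega_p$ sits inside some occurrence of $E_{m,i}$'' is monotone in position, the $p$-th occurrence of $\omega$ lies inside the $p$-th occurrence of $E_{m,i}$; that is, the unique copy of $\omega$ contained in $E_{m,i,p}$ is precisely $\omega_p$.

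Next, using $E_{m,i,p}=\mu_1\ast\omega_p\ast\mu_2$ from Theorem 5.1 and reading off positions: $E_{m,i,p}$ begins at index $L(E_{m,i},p)$ in $D_\infty$, and deleting the prefix $\mu_1$ shifts the starting index by $|\mu_1|$ to the right, so $\omega_p$ begins at index $L(E_{m,i},p)+|\mu_1|$. Hence $L(\omega,p)=L(E_{m,i},p)+|\mu_1|$. Substituting the formulas of Proposition 6.3 for $L(E_{m,1},p)$ and $L(E_{m,2},p)$ then yields a closed expression for $L(\omega,p)$ in terms of $N_j(x,p-1)$ and $|\mu_1|$.

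I do not expect a genuine obstacle here: the analytic content — that $\omega$ occurs exactly once in each $E_{m,i,p}$ and that $Env(\omega_p)=E_{m,i,p}$ — has already been secured in Theorems 3.7 and 5.1. The only point requiring care is the order-matching of the two factor sequences, so that ``$p$-th occurrence of $\omega$'' and ``$p$-th occurrence of $E_{m,i}$'' genuinely refer to corresponding objects; this follows at once from the monotonicity of the containment relation once the one-to-one correspondence of occurrences is in hand.
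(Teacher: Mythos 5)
Your argument is correct and matches the paper's intent: the paper states this proposition without proof, treating it as an immediate consequence of Theorem 5.1 ($E_{m,i,p}=\mu_1\ast\omega_p\ast\mu_2$), which is exactly how you derive it, and your extra care about the order-preserving correspondence of occurrences is precisely the content already secured in the proof of Theorem 5.1. (One trivial slip: the explicit formulas for $L(E_{m,i},p)$ are Proposition 6.2, not 6.3.)
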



\vspace{0.5cm}

\stepcounter{section}

\noindent\textbf{\large{7.~Combinatorial Properties of Factors}}

\vspace{0.4cm}

As an application, we will give some combinatorial properties of factors in sequence $D_\infty$.
Damanik \cite{D2000} gave a description of the sets of palindromes and powers occurring in Doubling sequence.
Using the properties of gaps and gap sequence of envelope words in Doubling sequence, we can get more explicit results.

\vspace{0.4cm}

\noindent\emph{7.1~Palindrome}

\vspace{0.4cm}

Since envelope words $E_{m,i}$ are palindromes, we can determine all palindromes with envelope $Env(\omega)=E_{m,i}$. Using it, we can give a description of the set of  palindromes.

\begin{property} Let $\omega$ be a palindrome, $Env(\omega)=E_{m,i}$ and $E_{m,i}=\mu_1\omega\mu_2$, then $|\mu_1|=|\mu_2|$.

(1) When $i=1$ and $m=1$ (resp. $2$), $|\mu_1|=0$ (resp. $|\mu_1|=0,1$);

(2) When $i=1$ and $m\geq3$, $0\leq|\mu_1|<3\ast2^{m-3}$;

(3) When $i=2$ and $m\geq1$, $0\leq|\mu_1|<2^{m-1}$.
\end{property}

\begin{proof} By Property 3.4, envelope words are palindromes.
Suppose $|\mu_1|<|\mu_2|$, then
$$\overleftarrow{E_{m,i}}=\overleftarrow{\mu_1\omega\mu_2}
=\overleftarrow{\mu_2}\overleftarrow{\omega}\overleftarrow{\mu_1}
=\overleftarrow{\mu_2}\omega\overleftarrow{\mu_1}=E_{m,i}=\mu_1\omega\mu_2.$$
This means $\omega$ occurs in $E_{m,i}$ twice. It contradicts Theorem 3.7.
So $|\mu_1|=|\mu_2|$.

Consider the range of $|\mu_1|$.

\textbf{Case 1}: $E_{m,1}=A_m\delta_m^{-1}$.
For $m\geq3$, suppose $|\mu_1|=3\ast2^{m-3}+h$ where $h\geq0$, then
\begin{equation*}
\begin{split}
&E_{m,1}[3\ast2^{m-3}+h+1,5\ast2^{m-3}-h-1]\\
=&(A_{m-3}B_{m-3}A_{m-3}A_{m-3}A_{m-3}B_{m-3}A_{m-3}B_{m-3})[3\ast2^{m-3}+h+1,5\ast2^{m-3}-h-1]\\
=&(A_{m-3}A_{m-3})[h+1,2^{m-2}-h-1]\prec A_{m-2}\delta_{m-2}^{-1}=E_{m-2,1}\sqsubset E_{m,1}.
\end{split}
\end{equation*}

When $|\mu_1|<3\ast2^{m-3}$, $A_{m-3}A_{m-3}\prec\omega$. Since $A_{m-3}A_{m-3}$ isn't the factor of envelope word which less than $E_{m,1}$, then $0\leq|\mu_1|<3\ast2^{m-3}$ in this case.

When $m=1$, $|\mu_1|=0$; when $m=2$, $|\mu_1|=0$ or $1$. 

\textbf{Case 2}: $E_{m,2}=A_{m-1}A_m\delta_m^{-1}$.
For $m\geq1$, suppose $|\mu_1|=2^{m-1}+h$ where $h\geq0$, then
\begin{equation*}
\begin{split}
&E_{m,2}[2^{m-1}+h+1,2^{m}-h-1]
=(A_{m-1}A_{m-1}B_{m-1})[2^{m-1}+h+1,2^{m}-h-1]\\
=&A_{m-1}[h+1,2^{m-1}-h-1]\prec A_{m-1}\delta_{m-1}^{-1}=E_{m-1,1}\sqsubset E_{m,2}.
\end{split}
\end{equation*}

When $|\mu_1|<2^{m-1}$, $A_{m-1}\prec\omega$. Since $A_{m-1}$ isn't the factor of envelope word which is less than $E_{m,2}$, $0\leq|\mu_1|<2^{m-1}$ in this case.
\end{proof}

\begin{corollary} All palindromes in sequence $D_\infty$ are of odd length, except ``$aa$". \end{corollary}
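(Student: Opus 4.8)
The plan is to derive Corollary 7.5 directly from Property 7.4 together with the palindrome structure of envelope words (Property 3.4) and the explicit forms $E_{m,1}=A_m\delta_m^{-1}$, $E_{m,2}=A_{m-1}A_m\delta_m^{-1}$. Since every factor $\omega\prec D_\infty$ has a well-defined envelope $Env(\omega)=E_{m,i}$, it suffices to enumerate, for each $(m,i)$, all palindromes $\omega$ with $Env(\omega)=E_{m,i}$ and check their parity. By Property 7.4, such a palindrome is obtained from $E_{m,i}$ by deleting a prefix $\mu_1$ and a suffix $\mu_2$ of \emph{equal length}, so $|\omega|=|E_{m,i}|-2|\mu_1|$, and hence $|\omega|\equiv|E_{m,i}|\pmod 2$. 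Thus the entire question reduces to computing the parity of $|E_{m,i}|$.

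First I would record the lengths: $|E_{m,1}|=|A_m|-1=2^m-1$, which is odd for every $m\geq1$; and $|E_{m,2}|=|A_{m-1}|+|A_m|-1=2^{m-1}+2^m-1=3\cdot2^{m-1}-1$, which is also odd for every $m\geq1$. So for \emph{every} choice of $(m,i)$ the envelope word $E_{m,i}$ has odd length, and therefore every palindrome sharing that envelope has odd length as well. This already shows every palindromic factor of $D_\infty$ has odd length, with the sole potential exception being a palindrome that is \emph{not} a factor in the usual sense or whose envelope argument degenerates. The one genuine subtlety is the very short words: the envelope machinery of Section 3 is stated for $m\geq1$, and one must check by hand the small palindromes — in particular $\varepsilon$ (length $0$, trivially even but usually excluded), the single letters $a$, $b$ (length $1$, odd), and the length-$2$ words. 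Among length-$2$ words only $aa$ is a palindrome and it \emph{is} a factor of $D_\infty$ (e.g. $D_\infty=abaaabab\cdots$); its envelope is $E_{1,2}=aa$ itself, consistent with Property 7.4 giving $|\mu_1|=|\mu_2|=0$, but $|E_{1,2}|=2$ is even. This is exactly the advertised exception.

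So the argument I would write is: (i) invoke Property 7.4 to get $|\omega|\equiv|Env(\omega)|\pmod 2$ for any palindromic factor $\omega$; (ii) compute $|E_{m,1}|=2^m-1$ and $|E_{m,2}|=3\cdot2^{m-1}-1$, both odd for all $m\geq1$; (iii) conclude $|\omega|$ is odd whenever $Env(\omega)\neq aa$, i.e. whenever $\omega\neq aa$ up to the trivial cases; (iv) handle $\omega=aa$ separately, noting it is the unique even-length palindromic factor, since $Env(aa)=E_{1,2}=aa$ has even length $2$. The main (and really only) obstacle is the bookkeeping of degenerate small cases: one must confirm that no palindrome other than $aa$ has $E_{1,2}$ as its envelope with $|\mu_1|=1$ — but $\mu_1$ would then have length $1$ and $\mu_2$ length $1$, deleting both letters and leaving $\varepsilon$, so there is no competitor. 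Likewise $bb$ and $ab$, $ba$ are not palindromes, so length $2$ contributes only $aa$. Everything else is the routine parity computation above, which I would present compactly rather than belaboring.
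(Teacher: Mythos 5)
Your proof is correct and follows essentially the same route as the paper's: Property 7.4 gives $|\omega|=|Env(\omega)|-2|\mu_1|$, so the parity of a palindrome reduces to the parity of its envelope word, and $E_{1,2}=aa$ is the unique even-length envelope word. One small slip: you assert $|E_{m,2}|=3\cdot2^{m-1}-1$ is odd for every $m\geq1$, which fails exactly at $m=1$ (where it equals $2$) --- but you correct this yourself two sentences later when you single out $aa$ as the exception, so the argument stands.
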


\begin{proof} Let $\omega$ be a palindrome. Since all envelope words are of odd length, except $E_{1,2}=aa$, and
$|\omega|=|Env(\omega)|-2\ast|\mu_1|,$
$|\omega|$ is odd.
\end{proof}

\begin{proposition} Let $P(n)$ be the number of palindromes of length $n$, then
$P(1)=2$, $P(2)=1$, $P(3)=3$. For $n\geq3$, $P(2n-2)=0$ and
\begin{equation*}
P(2n-1)=\begin{cases}
3,&3\ast2^{n}+1\leq n\leq2^{n-2};\\
4,&2^{n-2}+1\leq n\leq 3\ast2^{n+1}.
\end{cases}
\end{equation*}
\end{proposition}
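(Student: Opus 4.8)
The plan is to reduce the computation of $P(n)$ to counting, for each length, how many envelope words $E_{m,i}$ ``support'' a central palindrome of that length, and then to resolve the arithmetic of how these supporting ranges overlap. For $n\in\{1,2,3\}$ the stated values are checked directly (the $1$-letter palindromes are $a,b$; the only $2$-letter palindrome is $aa$; the $3$-letter palindromes are $aba,aaa,bab$), and for $n\geq3$ the equality $P(2n-2)=0$ is immediate from Corollary 7.2 because $2n-2\geq4$ and $aa$ is the unique palindrome of even length. So everything reduces to $P(2n-1)$ for $2n-1\geq5$.

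\emph{Reduction to windows.} Every factor of $D_\infty$ occurs inside a prefix $E_{N,1}=D_\infty[1,2^N-1]$, so $Env(\omega)=E_{m,i}$ is well defined for every palindrome $\omega$ (Definition 3.6). If $Env(\omega)=\mu_1\,\omega\,\mu_2$, then by Property 7.1 we have $|\mu_1|=|\mu_2|=\tfrac12(|E_{m,i}|-|\omega|)$, so $\omega$ is exactly the central factor of $E_{m,i}$ of its length. Conversely, since $E_{m,i}$ is a palindrome (Property 3.4), its central factor of any length $\ell$ with $\ell\equiv|E_{m,i}|\pmod2$ and $\ell\leq|E_{m,i}|$ is again a palindrome, and (by the argument proving Property 7.1, which shows the relevant short factors lie in no strictly smaller envelope word) its envelope is $E_{m,i}$ itself precisely when $\tfrac12(|E_{m,i}|-\ell)$ lies in the admissible range for $|\mu_1|$ recorded in Property 7.1. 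Distinct pairs $(m,i)$ give distinct central factors because they have distinct envelopes; hence, defining for each $(m,i)$ the window $W_{m,i}$ of lengths $\ell$ for which the central factor of $E_{m,i}$ of length $\ell$ has envelope $E_{m,i}$, we obtain the clean bijective count $P(\ell)=\#\{(m,i):\ell\in W_{m,i}\}$.

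\emph{The windows.} From $|E_{m,1}|=2^m-1$, $|E_{m,2}|=3\cdot2^{m-1}-1$ and the ranges of Property 7.1 one reads off $W_{1,1}=\{1\}$, $W_{2,1}=\{1,3\}$, $W_{1,2}=\{2\}$, and
$$W_{m,1}=\{\,\ell\text{ odd}:\ 2^{m-2}+1\leq\ell\leq2^m-1\,\}\ (m\geq3),\qquad W_{m,2}=\{\,\ell\text{ odd}:\ 2^{m-1}+1\leq\ell\leq3\cdot2^{m-1}-1\,\}\ (m\geq2).$$
Fix an odd $\ell\geq5$; then none of the exceptional windows $W_{1,1},W_{2,1},W_{1,2}$ contains $\ell$. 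Because $\ell$ is not a power of $2$, the inequalities $2^{m-2}<\ell<2^m$ hold for exactly two consecutive integers $m$, both $\geq3$, so precisely two windows $W_{m,1}$ contain $\ell$. Writing $2^k<\ell<2^{k+1}$ (so $k\geq2$), the inequalities $2^{m-1}<\ell<3\cdot2^{m-1}$ hold for two values of $m$ when $\ell<3\cdot2^{k-1}$ and for one value when $\ell>3\cdot2^{k-1}$, the middle value $3\cdot2^{k-1}$ being even and so never equal to $\ell$. Hence $P(\ell)=4$ when $2^k<\ell<3\cdot2^{k-1}$ and $P(\ell)=3$ when $3\cdot2^{k-1}<\ell<2^{k+1}$; substituting $\ell=2n-1$ converts these into the stated bounds on $n$.

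\emph{Main obstacle.} The delicate step is the endpoint bookkeeping in the last paragraph: one must check that the closed windows $W_{m,1}$ and $W_{m',2}$ interlock so that for every odd $\ell\geq5$ the total count is exactly $3$ or $4$ and never anything else, i.e.\ that consecutive same-type windows overlap exactly as claimed and that the two families never coincide in a way that over- or undercounts. This amounts to comparing the four endpoint expressions $2^{m-2}+1$, $2^m-1$, $2^{m-1}+1$, $3\cdot2^{m-1}-1$ across all admissible $m$. A secondary point of care is keeping the three exceptional small windows $W_{1,1},W_{2,1},W_{1,2}$ out of the general formula, which is precisely why $P(1),P(2),P(3)$ must be treated separately.
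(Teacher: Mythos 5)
Your proof is correct and follows essentially the same route as the paper's: both reduce, via Property 7.1 and the palindromicity of the envelope words, to the fact that each $E_{m,i}$ contributes exactly one palindrome for each admissible central length, and then tally how many of the resulting length-windows contain a given odd $\ell$. You are in fact more explicit than the paper about the final overlap bookkeeping (the paper compresses that step into ``So the proposition holds''), and your window endpoints agree with the paper's lists after the index shift $m\mapsto m+1$ in the $E_{m,1}$ family.
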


\begin{proof} All palindromes with envelope $E_{1,1}$ or $E_{2,1}$ are $\{a,b,aba\}$.

All lengths of palindromes with envelope $E_{m+1,1}$ for $m\geq2$ are $$\{2^{m-1}+1,2^{m-1}+3,\ldots,2^{m+1}-3,2^{m+1}-1\}.$$
There are one palindrome of each length above.

All lengths of palindromes with envelope $E_{m,2}$ for $m\geq1$ are $$\{2^{m-1}+1,2^{m-1}+3,\ldots,3\ast2^{m-1}-3,3\ast2^{m-1}-1\}.$$
There are one palindrome of each length above too.
So the proposition holds.
\end{proof}

\vspace{0.4cm}

\noindent\emph{7.2~Power, Overlap and Separate Properties Between $\omega_p$ and $\omega_{p-1}$}


\begin{property} Let $Env(\omega)=E_{m,1}$, then $G_0(\omega)\geq0$, $G_1(\omega)>0$ and
\begin{equation*}
G_2(\omega)\begin{cases}
>0,&|\omega|<2^{m-1},\\
=0,&|\omega|=2^{m-1},\\
<0,&|\omega|>2^{m-1}.
\end{cases}
\end{equation*}
\end{property}

\begin{property} Let $Env(\omega)=E_{m,2}$, then $G_0(\omega)>0$, $G_1(\omega)<0$,
$G_2(\omega)>0$, $G_4(\omega)>0$.
\end{property}

By Theorem 4.2, 4.3 and Proposition 5.3, all factor $\omega\in D_\infty$ can be divided into four types according to the different lengths of gaps. We denote those types by $T_j$, $j=1,2,3,4$.
Obviously, the disjoint union of sets $T_j$, $j=1,2,3,4$, consists of all factors in sequence $D_\infty$.

\begin{definition}[Types] The sets $T_j$ are defined as follow:

$T_1=\{\omega\in D_\infty: Env(\omega)=E_{m,1},~|\omega|<2^{m-1}\}$;

$T_2=\{\omega\in D_\infty: Env(\omega)=E_{m,1},~|\omega|=2^{m-1}\}$;

$T_3=\{\omega\in D_\infty: Env(\omega)=E_{m,1},~|\omega|>2^{m-1}\}$;

$T_4=\{\omega\in D_\infty: Env(\omega)=E_{m,2}\}$.
\end{definition}

\begin{corollary}\

(1) $\omega\in T_1\iff G_0(\omega)\geq0,~G_1(\omega)>0,~G_2(\omega)>0$;

(2) $\omega\in T_2\iff G_0(\omega)\geq0,~G_1(\omega)>0,~G_2(\omega)=0$;

(3) $\omega\in T_3\iff G_0(\omega)\geq0,~G_1(\omega)>0,~G_2(\omega)<0$;

(4) $\omega\in T_4\iff G_0(\omega)>0,~G_1(\omega)<0,~G_2(\omega)>0,~G_4(\omega)>0$.
\end{corollary}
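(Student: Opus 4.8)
The plan is to read off the four equivalences from Properties 7.9 and 7.10 combined with the structural fact (recorded just before Definition 7.8) that $T_1,T_2,T_3,T_4$ form a partition of the set of all factors of $D_\infty$. Throughout, ``$G_p(\omega)>0$'', ``$=0$'', ``$<0$'' are to be read as the sign of the signed length of the gap word, i.e. $\omega_p$ and $\omega_{p+1}$ separated, adjacent, or overlapped respectively; for $p=0$ the word $G_0(\omega)$ is an honest prefix, so $G_0(\omega)\geq0$ always and $G_0(\omega)>0$ means nonempty. Write $R_j$ for the set of factors satisfying the right-hand side of the $j$-th equivalence.

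\textbf{Step 1 (forward inclusions $T_j\subseteq R_j$).} If $\omega\in T_1\cup T_2\cup T_3$ then $Env(\omega)=E_{m,1}$ for some $m$, and Property 7.9 gives at once $G_0(\omega)\geq0$ and $G_1(\omega)>0$, while the trichotomy in Property 7.9 says the sign of $G_2(\omega)$ is positive, zero, or negative according as $|\omega|<2^{m-1}$, $|\omega|=2^{m-1}$, or $|\omega|>2^{m-1}$, that is, according as $\omega\in T_1$, $T_2$, or $T_3$. Hence $T_1\subseteq R_1$, $T_2\subseteq R_2$, $T_3\subseteq R_3$. If $\omega\in T_4$ then $Env(\omega)=E_{m,2}$, and Property 7.10 gives directly $G_0(\omega)>0$, $G_1(\omega)<0$, $G_2(\omega)>0$, $G_4(\omega)>0$, so $T_4\subseteq R_4$.

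\textbf{Step 2 (the $R_j$ are pairwise disjoint).} The conditions $R_1,R_2,R_3$ each require $G_1(\omega)>0$, whereas $R_4$ requires $G_1(\omega)<0$; hence $R_4\cap R_j=\emptyset$ for $j=1,2,3$. Among $R_1,R_2,R_3$ the required sign of $G_2(\omega)$ is $>0$, $=0$, $<0$ respectively, and these three mutually exclusive cases make $R_1,R_2,R_3$ pairwise disjoint as well.

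\textbf{Step 3 (reverse inclusions and conclusion).} Let $\omega$ be any factor. By the partition property it lies in exactly one $T_k$, hence in $R_k$ by Step 1. If in addition $\omega\in R_j$, then Step 2 forces $j=k$, so $\omega\in T_j$; this yields $R_j\subseteq T_j$. Combined with Step 1 we obtain $T_j=R_j$ for $j=1,2,3,4$, which is precisely the four stated equivalences. In particular no delicate argument about $G_4(\omega)$ is needed: once $\omega\in R_4$ forces $G_1(\omega)<0$, membership in $T_4$ follows, and then $G_4(\omega)>0$ comes for free from Property 7.10.

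There is no real obstacle here; the entire mathematical content sits in Properties 7.9 and 7.10. The only point requiring a little care is the bookkeeping of signs: one must check that the sign patterns $(G_0,G_1,G_2)$ (and $G_4$ in the fourth case) listed on the right-hand sides are genuinely pairwise incompatible, so that the partition $\{T_j\}$ transfers to a partition $\{R_j\}$ and the forward inclusions can be upgraded to equalities.
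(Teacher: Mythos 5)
Your argument is correct and is exactly the route the paper intends: the corollary is stated without proof as an immediate consequence of the two sign properties for $Env(\omega)=E_{m,1}$ and $Env(\omega)=E_{m,2}$ (Properties 7.4 and 7.5 in the paper's numbering, not 7.9 and 7.10, which are the Power and Separated Properties) together with the remark that the $T_j$ partition the set of factors. The partition-plus-pairwise-disjointness upgrade from forward inclusions to equivalences is the right and only needed observation; just fix the two reference numbers.
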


We will study some combinatorial properties such as adjacent, separated and overlapped of factors. From our knowledge, all previous studies on combinatorial only consider factor $\omega$. We consider not only $\omega$ but also $p$, which is much more difficult.

\begin{definition}[Properties] Let $\omega\in D_\infty$, $p\geq1$, $i\geq1$, define

$\mathcal{P}_i=\{(\omega,p):G_j(\omega)=0,j=p,p+1,\ldots,p+i-1\}
=\{(\omega,p):\omega_p\cdots\omega_{p+i}\prec D_\infty\}$;

$\mathcal{S}_i=\{(\omega,p):G_j(\omega)>0,j=p,p+1,\ldots,p+i-1\}$, $\mathcal{S}_\infty=\{(\omega,p):G_j(\omega)>0,\forall j\geq1\}$;

$\mathcal{O}_i=\{(\omega,p):G_j(\omega)<0,j=p,p+1,\ldots,p+i-1\}$.
\end{definition}

\begin{property}[Power Property]
$\mathcal{P}_1=(T2,\Gamma_1(b))$,
$\mathcal{P}_2=(T2,\Gamma_1(bb))$,
$\mathcal{P}_j=\emptyset$ for $j\geq3$.
\end{property}

\begin{property}[Separated Property]\

$\mathcal{S}_1=(T1,\mathbb{N})\cup(T2\cup T3,\Gamma_1(a))\cup(T4,\Gamma_2(b)\cup\Gamma_2(c))$;

$\mathcal{S}_2=(T1,\mathbb{N})\cup(T2\cup T3,\Gamma_1(aa))$;

$\mathcal{S}_3=(T1,\mathbb{N})\cup(T2\cup T3,\Gamma_1(aaa))$;

$\mathcal{S}_j=(T1,\mathbb{N})$ for $j\geq4$ and $j=\infty$.
\end{property}

\begin{property}[Overlapped Property]\

$\mathcal{O}_1=(T3,\Gamma_1(b))\cup(T4,\Gamma_2(a))$,
$\mathcal{O}_2=(T3,\Gamma_1(bb))$,
$\mathcal{O}_j=\emptyset$ for $j\geq3$.
\end{property}

\vspace{0.4cm}

\noindent\emph{7.3~Squares and Cubes in Sequence $D_\infty$}

\vspace{0.4cm}

Brown, Rampersad, Shallit and Vasiga \cite{BRSV2006} described all the positions of squares occurring in Thue-Morse sequence $M_\infty$. They also counted the number of distinct squares beginning in $M_\infty[1,N]$ for $N\geq1$.
In this subsection, we are going to
count the number of distinct squares (resp. cubes) beginning in $D_\infty[1,N]$ for $N\geq1$.
Factor $\omega_p$ is said to be begin in $D_\infty[1,N]$, if the first letter of $\omega_p$ occurs in $D_\infty[1,N]$.

\begin{property}[Number of distinct squares and cubes]\

Let $c(N)$ be the number of distinct squares beginning in $D_\infty[1,N]$. Then
\begin{equation*}
c(N+1)-c(N)=\begin{cases}
1,&2^m\leq N<3\ast2^{m-1};\\
0,&3\ast2^{m-1}\leq N<2^{m+1}.
\end{cases}
\end{equation*}
where $m\geq1$.
The number of distinct cubes beginning in $D_\infty[1,N]$ is $c(N)$ too.
\end{property}

\begin{proof} Let $\omega\omega\prec D_\infty$, then $\omega\in T2$. Let $Env(\omega)=E_{m,1}$ and $E_{m,1}=\mu_1\omega\mu_2$. By Property 7.9, $\mathcal{P}_1=(T2,\Gamma_1(b))$ and
$\mathcal{P}_2=(T2,\Gamma_1(bb))$.
Since $\Theta_1[2]=b$ and $\Theta_1[2,3]=bb$,
by Theorem 5.2(1),
$$L(\omega\omega,1)=L(\omega\omega\omega,1)=L(\omega,2)=|\mu_1|+L(E_{m,1},2)=|\mu_1|+2^m+1,$$
where $|\mu_1|+|\mu_2|=|E_{m,1}|-|\omega|=2^{m-1}-1$, so $0\leq|\mu_1|\leq2^{m-1}-1$.
The property holds.
\end{proof}


\vspace{0.5cm}

\noindent\textbf{\large{Acknowledgments}}

\vspace{0.4cm}

The research is supported by the Grant NSFC No.11271223, No.11371210 and No.11326074.

\end{CJK*}
\end{document}